\theoremstyle{plain}\newtheorem{definition}{Definition}[section]
\theoremstyle{definition}\newtheorem{theorem}{Theorem}[section]
\theoremstyle{plain}\newtheorem{lemma}[theorem]{Lemma}
\theoremstyle{plain}
\theoremstyle{plain}\newtheorem{prop}[theorem]{Proposition}
\theoremstyle{remark}\newtheorem{remark}{Remark}[section]
\newcommand{\R}{\mathbb{R}}
\newcommand{\ti}{\nabla}
\newcommand{\ep}{\varepsilon}
\newcommand{\qand}{\qquad \text{and} \qquad}
\newcommand{\set}[1]{\left\{#1\right\}}
\newcommand{\norm}[1]{{\left\Vert #1 \right\Vert}}
\DeclareMathOperator*{\Div}{div}
\numberwithin{equation}{section}
\begin{document}

\title{Improved bounds for box dimensions of potential singular points to the Navier--Stokes equations}

\author{Yanqing Wang\footnote{Department of Mathematics and Information Science, Zhengzhou University of Light Industry, Zhengzhou, Henan  450002,  P. R. China. Email: wangyanqing20056@gmail.com}  ~~ \& ~ Minsuk Yang\footnote{Correspondence author, Department of Mathematics, Yonsei University, 50 Yonsei-ro Seodaemun-gu, Seoul, Republic of Korea. Email: m.yang@yonsei.ac.kr }}

\date{}

\maketitle

\begin{abstract}
In this paper, we study the potential singular points of interior and boundary suitable weak solutions to the 3D Navier--Stokes equations.
It is shown that upper box dimension of interior singular points and boundary singular points are bounded by $7/6$ and $10/9$, respectively.
Both proofs rely on recent progress of $\varepsilon$-regularity criteria at one scale.
\end{abstract}

\noindent {\bf MSC(2000):}\quad 76D03, 76D05, 35B33, 35Q35 \\
\noindent {\bf Keywords:} Navier--Stokes equations; suitable weak solutions; singular points; regularity criteria

\section{Introduction}
\label{S1}
\setcounter{section}{1}\setcounter{equation}{0}

We  consider  the following three-dimensional incompressible Navier--Stokes equations
\begin{equation}
\label{NS}
\left\{
\begin{split}
&u_{t} -\Delta  u+ u\cdot\ti u+\nabla \Pi=0, \\
&\Div u=0,\\
&u|_{t=0}=u_0,
\end{split}
\right.
\end{equation}
where $u$ is the velocity field, $\Pi$ is the scalar pressure, and the initial velocity $u_0$ satisfies the condition $\Div u_0=0$.
We investigate the regularity problem of the Navier--Stokes equations.
Our main objective of this paper is to further lower the box dimension of potential singular points of suitable weak solutions to the Navier--Stokes equations.

A point $z=(x,t)$ is said to be regular if $u$ is H\"older continuous at a neighborhood of $z$.
Otherwise, it is called singular.
In a celebrated work \cite{[CKN]},  Caffarelli, Kohn and Nirenberg obtained two $\ep$-regularity criteria to  suitable weak solutions of \eqref{NS}: $z=0$ is regular point provided that one of the following conditions holds for an absolute positive constant $\ep$,
\begin{equation}
\label{CKN}		
\norm{u}_{L^{3}(Q(1))}+\|u\Pi\|_{L^1(Q(1))}+\|\Pi\|_{L^{1,5/4}(Q(1))} < \ep.
\end{equation}
and
\begin{equation}
\label{ckn2}
\limsup_{r\rightarrow0}  \norm{\nabla u}_{L^{2}(Q(r))} < \ep.
\end{equation}
Lin \cite{[Lin]}, Ladyzhenskaya and Seregin \cite{[LS]} gave an alternative condition
\begin{equation}
\label{Lin}
\norm{u}_{L^{3}(Q(1))}+\|\Pi\|_{L^{3/2}(Q(1))} < \ep.
\end{equation}
instead of \eqref{CKN}.
Recently, Guevara and Phuc \cite{[GP]} improved \eqref{CKN} and \eqref{Lin} to
\begin{equation}
\label{GP}
\norm{u}_{L^{2p, 2q} (Q(1))}+\|\Pi\|_{L^{p, q}(Q(1))} < \ep,
\end{equation}
for some $p, q$ satisfying ${3}/{p}+{2}/{q} = 7/2$ and $1\leq q\leq2$.
Subsequently, authors in \cite{[HWZ]} found  that \eqref{GP} can be replaced by
\begin{equation}
\label{optical}
\norm{u}_{L^{p,q}(Q(1))}+\|\Pi\|_{L^{1}(Q(1))}<\ep
\end{equation}
for some $p, q$ satisfying $1\leq 2/p+3/q <2$ and $1\leq p,\,q\leq\infty$.
Other interior regularity criteria similar to \eqref{CKN} can be found in \cite{[Vasseur],[CV],[WZ]}.
Since the gradient of the pressure appears in \eqref{NS}, one can replace $\Pi$ as $ \Pi-\overline{\Pi}_{B(1)}$ by subtracting an average in \eqref{CKN}, \eqref{Lin}, \eqref{GP}, and \eqref{optical}.
Let $\mathcal{S}_{i}$ denote the possible interior singular points of suitable weak solutions to the 3D Navier--Stokes equations.
One can use the condition \eqref{CKN} and \eqref{ckn2} to obtain that
\[
\dim_{H}(\mathcal{S}_{i})\leq 1 \qand \dim_{B}(\mathcal{S}_{i})\leq5/3,
\]
where $\dim_{H}(S)$ and $\dim_{B}(S)$ denote the Hausdorff dimension and box dimension of a set $S$, respectively.
For the background of fractal dimension, we refer the reader to \cite{[Falconer]}.
The relationship between Hausdorff dimension and box dimension is that the former  is less than the latter.

In the past decade, starting from Kukavica's wrok \cite{[Kukavica]}, several authors try to lower the box dimension of potential interior singular points for suitable weak solutions to the 3D Navier--Stokes equations to $1$ (Kukavica  \cite{[Kukavica]} $(135/82\approx 1.65)$; Kukavica and  Pei \cite{[KP]} $45/29(\approx1.55)$; Koh and Yang \cite{[KY16]} $95/63( 1.51)$; Wang and Wu $(360/277\approx1.30)$; Ren, Wang and Wu \cite{[RWW]} $(975/758\approx1.29)$; He, Wang and Zhou  \cite{[HWZ]} $(2400/1903\approx1.261)$).

In this paper, we also consider the potential boundary singular points of boundary suitable weak solutions to the Navier--Stokes equations.
In particular,   the Navier--Stokes equations with no-slip conditions  are given by
\begin{equation}
\label{NSB}
\left\{
\begin{split}
&u_{t} -\Delta  u+ u\cdot\ti
u +\nabla \Pi=0, \mathbb{R}^{3}_{+}\times(0,T)\\
&\Div u=0,~~u|_{\partial\mathbb{R}^{3}_{+}}=0\\
&u|_{t=0}=u_0.
\end{split}
\right.
\end{equation}

For the boundary regularity problem of the Navier--Stokes equations \eqref{NSB}, Seregin \cite{[Seregin]} obtained the following two $\ep$-regularity conditions
\begin{align}
\label{sb1}
\limsup_{\mu\rightarrow0}  \norm{\nabla u}_{L^{2}(Q^{+}(\mu))} &< \ep, \\
\label{sb2}
\norm{u}_{L^{3}(Q^{+}(1))}+\|\Pi\|_{L^{3/2}(Q^{+}(1))} &< \ep.
\end{align}
After that, Gustafson, Kang and Tsai \cite{[GKT]} obtained the $\ep$-regularity condition
\begin{equation}
\label{gktb}
\norm{u}_{L^{3}(Q^{+}(1))}+\|\Pi\|_{L^{\lambda,\kappa^{\ast}}(Q^{+}(1))} < \ep,
\end{equation}
where $\lambda$ and $\kappa^{\ast}$ are defined in \eqref{gktc1}.
Let $\mathcal{S}_{b}$ denote the possible boundary singular points of boundary suitable weak solutions to the 3D Navier--Stokes equations.
Using these $\ep$-regularity conditions, Choe and Yang \cite{[CY18]} recently obtained a result about upper box dimension $\dim_{B}(\mathcal{S}_{b})$.
In this paper we improve all the previous bounds for the interior and boundary singular points.

The rest of this paper is organized as follows.
In Section 2, we give the precise statement of our main results.
In Section 3, we introduce relevant notation and definitions of suitable weak solutions to \eqref{NS} and \eqref{NSB}, respectively.
In Section 4, we present a few auxiliary lemmas which are interpolation inequalities and decay estimates for the pressure.
In Section 5, we prove Proposition \ref{boxprop} and deduce Theorem \ref{the1.1}, which is an estimation of box dimension of $\mathcal{S}_{i}$ in \eqref{NS}.
In Section 6, we prove Theorem \ref{brc} and Proposition \ref{boxpropb} and deduce Theorem \ref{the1.2}, which is an estimation of box dimension of $\mathcal{S}_{b}$ in \eqref{NSB}.

\section{Main results}
\label{S2}

In this section we state our main results of this paper.

\begin{theorem}
\label{the1.1}
The upper box dimension of  $\mathcal{S}_{i}$ in \eqref{NS} is at most $7/6$.
\end{theorem}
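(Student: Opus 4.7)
The plan is to reduce Theorem~\ref{the1.1} to a one-scale lower bound at singular points (which should be the content of Proposition~\ref{boxprop}) of the schematic form
\[
z_0 \in \mathcal{S}_i \ \Longrightarrow\ \int_{Q(z_0, r)} G(u, \nabla u)\, dz \geq \varepsilon_0\, r^{7/6}
\quad \text{for all small } r > 0,
\]
where $G(u,\nabla u)$ is a pointwise nonnegative quantity that is globally integrable on $Q(2)$ by the energy inequality (the natural candidates being $|u|^{10/3}$ or a combination $|u|^a|\nabla u|^b$ delivered by the Section~4 interpolation lemmas). Once such a lower bound is in place, a Vitali covering of $\mathcal{S}_i$ by disjoint parabolic cylinders $Q(z_i,r)$ and summation give $N(r)\,\varepsilon_0\, r^{7/6} \leq \int_{Q(2)} G\, dz < \infty$, so $N(r) \lesssim r^{-7/6}$ and $\dim_B(\mathcal{S}_i)\leq 7/6$.

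To produce the lower bound I would start from the widened one-scale criterion \eqref{optical}, rescaled to an arbitrary scale $r$ centered at $z_0$. Setting $\alpha := 2/p + 3/q \in [1,2)$, the rescaling yields the dichotomy: for every admissible pair, either
\[
\|u\|_{L^{p,q}(Q(z_0,r))} \geq \varepsilon\, r^{\alpha-1}
\quad\text{or}\quad
\|\Pi - \overline{\Pi}_{B(r)}\|_{L^1(Q(z_0,r))} \geq \varepsilon\, r^{3}.
\]
To dispose of the pressure alternative, I would split $\Pi$ on a larger comparison ball $B(R)$ via the Poisson equation $-\Delta\Pi=\partial_i\partial_j(u_i u_j)$: a local Calder\'on--Zygmund part controlled by $\|u\|^2$ on $B(R)$ plus a harmonic remainder which satisfies a Campanato-type decay of the form $r^{-3}\|\Pi^{\mathrm{har}}-\overline{\Pi^{\mathrm{har}}}_{B(r)}\|_{L^1(B(r))} \lesssim (r/R)\,R^{-3}\|\Pi^{\mathrm{har}}-\overline{\Pi^{\mathrm{har}}}_{B(R)}\|_{L^1(B(R))}$. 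Iterating geometrically across scales and absorbing the contractive pressure term then yields, at every singular point and at an enlarged scale $R = \lambda^{-1}r$, a velocity-only lower bound on $\|u\|_{L^{p,q}(Q(z_0,R))}$.

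The final step is to convert this velocity-only bound, via H\"older interpolation against a global norm whose integrand is $G$, into the displayed integral lower bound with exponent exactly $7/6$. The principal obstacle is the joint optimization in Step~2: choosing the admissible $(p,q)$ inside the widened range of \eqref{optical}, the scale ratio $\lambda$, and the H\"older exponents coherently so that the surviving scaling deficit equals $7/6$ and not $5/3$ or one of the intermediate exponents obtained in \cite{[HWZ],[RWW]}. A second, more technical difficulty is to close the pressure iteration in a single step with a geometric, absorbable factor rather than losing a $\log(1/r)$; this is precisely the role played by the sharpened interpolation inequalities and pressure decay estimates announced for Section~4.
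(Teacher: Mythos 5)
Your outer reduction is the same as the paper's: Theorem \ref{the1.1} is deduced from a one-scale lower bound at singular points (the contrapositive of Proposition \ref{boxprop}, which gives $\int_{Q(z,r)}(|\nabla u|^{2}+|u|^{10/3}+|\Pi-\overline{\Pi}|^{5/3}+|\nabla\Pi|^{5/4})\,dxds \ge \ep_{1}r^{5/3-\gamma}$ at every singular point, for each $\gamma<1/2$), followed by a Vitali covering; this is exactly the "well-known contradiction argument" the paper invokes. The genuine gap is in how you propose to produce that lower bound. Your scheme --- rescale the scale-invariant criterion \eqref{optical} to get the dichotomy $\norm{u}_{L^{p,q}(Q(z_0,r))}\ge \ep r^{\alpha-1}$ or a pressure bound, remove the pressure by a harmonic/Calder\'on--Zygmund splitting at a fixed scale ratio, then H\"older against a globally integrable quantity --- cannot yield any exponent below $5/3$, no matter how $(p,q)$, the ratio $\lambda$, and the H\"older exponents are tuned. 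The reason is pure scaling: for the H\"older step you need $p,q\le 10/3$, and then
\[
\norm{u}_{L^{p,q}(Q(z_0,r))} \le C\, r^{\alpha-3/2}\norm{u}_{L^{10/3}(Q(z_0,r))},
\qquad \alpha = \tfrac{2}{p}+\tfrac{3}{q},
\]
so the dichotomy delivers $\int_{Q(z_0,r)}|u|^{10/3}\,dxds \ge c\,\ep^{10/3} r^{5/3}$: the factors $r^{\alpha-1}$ and $r^{\alpha-3/2}$ always recombine to the scale-invariant exponent $5/3$. The "joint optimization" you flag as the principal obstacle is therefore not an optimization problem; within a purely scale-invariant framework there is nothing to optimize, and every known bound below $5/3$ (including the paper's $7/6$) leaves that framework.

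What actually breaks the $5/3$ barrier in the paper is a mechanism absent from your proposal: a \emph{subcritical} one-scale hypothesis bootstrapped through the local energy inequality at a \emph{variable} scale ratio. One assumes \eqref{condi}, i.e. smallness of order $\ep_{1}\rho^{5/3-\gamma}$ (weaker than scale-invariant smallness since $5/3-\gamma<5/3$), feeds it into the local energy inequality to get $A(\rho)\lesssim \ep_{1}^{3/5}\rho^{1/6-\gamma}$ --- a bound that is allowed to blow up as $\rho\to0$ --- and then passes to the much smaller, $\rho$-dependent scale $\theta\rho$ with $\theta=\rho^{1/6}$. The interpolation inequality \eqref{cp} and the pressure-gradient decay of Lemma \ref{presurel} trade the negative powers of $\theta$ against the positive surplus $\rho^{2/3-\gamma}$ in $E(\rho)$, and the balance at $\beta=1/6$ forces exactly $\gamma\le 1/2$, i.e. the exponent $5/3-1/2=7/6$. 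Regularity at scale $\theta\rho$ is then concluded from the criterion \eqref{special}, $\norm{\nabla u}_{L^{2}}+\norm{u}_{L^{2}}+\|\nabla\Pi\|_{L^{1,3/2}}<\ep$; note that the role of \eqref{optical} in the paper is only to derive this criterion via Poincar\'e--Sobolev, not to furnish a dichotomy directly, and its crucial feature is the absence of $\norm{u}_{L^{\infty,2}}$, which is precisely the quantity that cannot be kept small under the subcritical hypothesis. Without this ingredient your plan stalls at $\dim_{B}(\mathcal{S}_{i})\le 5/3$.
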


\begin{remark}
The bound $7/6$ is better than the previous results obtained in \cite{[KP],[RS2],[Kukavica],[HWZ],[RWW],[KY16],[WW2]}.
It is a direct consequence of the following regularity criterion.
\end{remark}

\begin{prop}
\label{boxprop}
Suppose that $(u, \,\Pi)$ is a suitable weak solution to (\ref{NS}).
Then for each $\gamma <1/2$ there exist positive numbers $\ep_{1}$ and $r_0<1$ such that $z=(x,t)$ is a regular point if for some $r<r_0$
\begin{equation}
\label{condi}
\int_{Q(z,r)} |\nabla u|^{2} + |u|^{ 10/3} + |\Pi-\overline{\Pi}_{ B (x,r)}|^{ 5/3} + |\nabla \Pi|^{5/4} dxds
< r^{5/3-\gamma}\ep_{1}
\end{equation}
where $Q(z,r)$ denotes a parabolic cylinder (see the next section for notations and definitions).
\end{prop}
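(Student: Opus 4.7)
The plan is to deduce regularity at $z$ by establishing a classical one-scale $\varepsilon$-regularity criterion---for example \eqref{Lin} or \eqref{optical}---at some parabolic cylinder $Q(z,\rho)$ with $\rho\ll r$. The hypothesis \eqref{condi} is $r^{-\gamma}$-supercritical with respect to the parabolic Navier--Stokes scaling (the natural scaling exponents of the four summands are $1,\tfrac{5}{3},\tfrac{5}{3},\tfrac{5}{4}$, whereas the right--hand side carries exponent $5/3-\gamma$), so the Navier--Stokes equation itself must supply a geometric gain at each refinement of scale that overwhelms this supercritical deficit. The two mechanisms for that gain---the local energy inequality together with Gagliardo--Nirenberg interpolation for the velocity, and a Calder\'on--Zygmund/harmonic decomposition for the pressure---are precisely the auxiliary tools announced for Section~4.

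After translating so that $z=0$, I would introduce the combined scale--invariant quantity
\[\Psi(\rho):=\sup_{-\rho^{2}\le t\le 0}\rho^{-1}\!\!\int_{B(\rho)}\!\!|u|^{2}\,dx+\rho^{-1}\!\!\int_{Q(\rho)}\!\!|\ti u|^{2}+\rho^{-5/3}\!\!\int_{Q(\rho)}\!\!|u|^{10/3}+\rho^{-5/3}\!\!\int_{Q(\rho)}\!\!|\Pi-\overline{\Pi}_{B(\rho)}|^{5/3}+\rho^{-5/4}\!\!\int_{Q(\rho)}\!\!|\ti\Pi|^{5/4},\]
so that the hypothesis translates to $\Psi(r)\le r^{-\gamma}\varepsilon_{1}$, up to absorbing a controllable $r^{5/12-\gamma}$ power into the last summand. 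I would then prove a one--step iteration inequality of the form
\[\Psi(\theta\rho)\le \theta^{\sigma}\Psi(\rho)+C\theta^{-\tau}\Psi(\rho)^{1+\delta}\]
for some $\sigma,\tau,\delta>0$ and every $\theta\in(0,\tfrac{1}{2})$. For the velocity part, applying the local energy inequality with a cutoff $\phi$ supported in $Q(\rho)$ and $\phi\equiv 1$ on $Q(\theta\rho)$ gives bounds on the $A$ and $E$ pieces of $\Psi(\theta\rho)$, and Gagliardo--Nirenberg then yields the $U$ piece, with cutoff losses absorbed into $\theta^{-\tau}$; the pressure cross--term $\int u\cdot\ti\phi^{2}(\Pi-\overline{\Pi})$ is estimated through the pressure quantities $P$ and $G$. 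For the pressure, I would decompose $\Pi=\Pi_{1}+\Pi_{2}$ on $B(\rho)$ with $-\Delta\Pi_{1}=\partial_{i}\partial_{j}(\chi u_{i}u_{j})$ and $\Pi_{2}$ harmonic, so that Calder\'on--Zygmund controls $\Pi_{1}$ and $\ti\Pi_{1}$ by $\|u\|_{L^{10/3}(Q(\rho))}^{2}$, while the mean--value property yields the polynomial decay $P_{2}(\theta\rho)+G_{2}(\theta\rho)\le C\theta^{k}[P_{2}(\rho)+G_{2}(\rho)]$ for arbitrarily large $k$; this is the source of the contraction factor $\theta^{\sigma}$.

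Fixing $\theta$ so small that $C\theta^{\sigma}<1/4$ and iterating from $\rho_{0}=r$, the contraction $\theta^{\sigma}$ beats the supercritical loss $r^{-\gamma}$ provided $\sigma>\gamma$; after finitely many steps $\Psi$ at the new scale drops below the threshold $\varepsilon$ of \eqref{optical} (or of \eqref{Lin} after H\"older), and that classical one-scale criterion yields H\"older continuity of $u$ near $z$, so $z$ is regular. The main obstacle is that the contraction rate $\sigma$ must strictly exceed $\gamma$ for \emph{every} $\gamma<1/2$, which forces a delicate choice of the five weights defining $\Psi$ and a tight use of the pressure--gradient decay. The upper threshold $1/2$ arises from balancing the harmonic decay of $\Pi_{2}$ and $\ti\Pi_{2}$ against the Gagliardo--Nirenberg interpolation exponents used to close the bound on $U$; the inclusion of the $|\ti\Pi|^{5/4}$ term in \eqref{condi}---absent from the predecessor criteria used in \cite{[HWZ],[RWW]}---supplies precisely the additional decay needed to push the admissible $\gamma$ up to $1/2$, which, via the Vitali covering argument of Section~5, delivers the box--dimension bound $7/6=5/3-1/2$ announced in Theorem~\ref{the1.1}.
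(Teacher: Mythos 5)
Your overall toolkit (local energy inequality, Gagliardo--Nirenberg interpolation, Calder\'on--Zygmund pressure decomposition, conclusion via a one-scale $\ep$-regularity criterion) matches the paper's, but the core mechanism you propose --- iterating a fixed-ratio inequality $\Psi(\theta\rho)\le\theta^{\sigma}\Psi(\rho)+C\theta^{-\tau}\Psi(\rho)^{1+\delta}$ --- cannot work here, and this is a genuine gap. The hypothesis \eqref{condi} only gives $\Psi(r)\lesssim\ep_{1}r^{-\gamma}$, which is \emph{large} as $r\to0$, not small. Consequently the superlinear term dominates already at the first step, $\theta^{-\tau}\Psi(r)^{1+\delta}\gg\theta^{\sigma}\Psi(r)$, and each further iteration multiplies the exponent of $r^{-\gamma}$ by $(1+\delta)$, so $\Psi(\theta^{k}r)$ compounds like $r^{-\gamma(1+\delta)^{k}}$ instead of contracting. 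Your condition $\sigma>\gamma$ only addresses the linear part; no fixed-$\theta$ iteration can absorb supercritically large starting data, which is exactly why CKN-type iterations always require smallness at the initial scale. The paper avoids iteration entirely: it makes a \emph{single} scale jump with a $\rho$-dependent ratio $\theta=\rho^{\beta}$, $\beta=1/6$. Because each quantity then carries an explicit power of $\rho$ (the energy inequality gives $E(\rho)\lesssim\ep_{1}\rho^{2/3-\gamma}$ and $A(\rho)\lesssim\ep_{1}^{3/5}\rho^{1/6-\gamma}$), the losses $\theta^{-1}=\rho^{-1/6}$ in the interpolation inequality \eqref{cp} and in the pressure-gradient decay estimate of Lemma \ref{presurel} are absorbed by these positive powers, and all resulting exponents of $\rho$ are nonnegative precisely when $\gamma\le1/2$; this is where the threshold $1/2$ actually comes from, not from the harmonic decay rate of the pressure, which can be made arbitrarily fast and is never the binding constraint.

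There is a second structural problem: you include $A(\rho)=\sup_{t}\rho^{-1}\int_{B(\rho)}|u|^{2}$ in $\Psi$ and plan to conclude with \eqref{Lin} or \eqref{optical}. Under \eqref{condi} the energy inequality yields only $A(\rho)\lesssim\ep_{1}^{3/5}\rho^{1/6-\gamma}$, which is large for $\gamma>1/6$ and is never made small at any scale by arguments of this type; so a scheme that needs all of $\Psi$ (in particular $A$) to fall below a threshold cannot close, and likewise $\norm{u}_{L^{3}}$-smallness for \eqref{Lin} is out of reach since producing it from $E$ and $E_{2}$ via \eqref{cp} reintroduces $A$. The paper's decisive ingredient --- emphasized in the remark following Proposition \ref{boxprop} --- is the one-scale criterion \eqref{special}, $\norm{\nabla u}_{L^{2}}+\norm{u}_{L^{2}}+\|\nabla\Pi\|_{L^{1,3/2}}<\ep$, obtained from \eqref{optical} with $(p,q)=(2,6)$ by the Poincar\'e--Sobolev inequality. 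Its three quantities $E$, $E_{2}$, $P_{1,3/2}(\nabla\Pi)$ admit decay estimates (namely \eqref{cp} with $p=2$, and Lemma \ref{presurel}) in which $A$ never multiplies the leading term, so smallness of exactly these three quantities at the single scale $\theta\rho$ is both achievable and sufficient. Without this choice of criterion, or an equivalent one avoiding $A$ and $\norm{u}_{L^{3}}$, the range $\gamma<1/2$, and hence the bound $7/6$, is not attainable by your scheme.
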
	

The proof of Proposition \ref{boxprop} is different from recent arguments in \cite{[KY16],[WW2],[RWW],[HWZ]}.
The key point is to apply the following $\ep$-regularity criteria, for any $\delta>0$,
\[
\norm{u}_{L^{2p}(Q(1))} + \|\Pi\|_{L^{p}(Q(1))} < \ep \qand p > \frac{5}{2} + \delta,
\]
to establish an iteration scheme.
Our starting point is the following $\ep$-regularity criterion
\begin{equation}
\label{special}
\norm{\nabla u}_{L^{2}(Q(1))} + \norm{u}_{L^{2}(Q(1))} + \|\nabla\Pi\|_{L^{1,3/2}(Q(1))} < \ep,
\end{equation}
which is  derived from
\begin{equation}
\label{specia2}
\norm{u}_{L^{2,6}(Q(1))} + \|\nabla\Pi\|_{L^{1,3/2}(Q(1))} < \ep.
\end{equation}
It is worth noting that we get \eqref{specia2} by \eqref{optical} and the Poinca\'e--Sobolev inequality.

We briefly illustrate our strategy in the proof of Proposition \ref{boxprop}, see Section \ref{S5} for its detailed proof.
We bound
\[
{(\theta\rho)}^{-3/2} \norm{u}_{L^{2}(Q(\theta\rho))}
\]
via interpolation inequality  \eqref{cp} and hypothesis \eqref{condi}.
And then making use of the divergence free condition, we establish a pressure decay estimate \eqref{presure} in terms of $\nabla \Pi$.
Using the same scaled quantities, we get smallness of ${(\theta\rho)}^{-3/2}\norm{u}_{L^{2}(Q(\theta\rho))}$ as well as
\[
(\theta\rho)^{-1}(\|\nabla\Pi\|_{L^{5/4}(Q(\theta\rho))}+\norm{\nabla u}_{L^{2}(Q(\theta\rho))}).
\]
This together with \eqref{special} completes the proof of Proposition \ref{boxprop}.

Before we turn out attention to the boundary case, we give one more remark.

\begin{remark}
As was observed in \cite[Remark 1.4, p1762]{[WW2]}, it seems that it is useful to use the quantity $\norm{\nabla u}_{L^{2}}$ instead of $\|u\|_{L^{\infty,2}}$ in estimating box dimension of the singular set.
The advantage of $\norm{u}_{L^{2}(Q(1))}$ and $\|\nabla\Pi\|_{L^{1,3/2}(Q(1))}$ in \eqref{special} is the absence of $\norm{u}_{L^{\infty,2}}$ in first part of  estimate \eqref{cp} and \eqref{presure}.
This helps us to make full  use of  $\norm{\nabla u}_{L^{2}}$ in our proof.
One can show that $\dim_B(\mathcal{S}_{i}) \le 37/30$ via a combination of the proof described above and the $\ep$-regularity criterion
\begin{equation}
\label{special4}
\norm{u}_{L^{5/2+\delta}(Q(1))} + \|\nabla\Pi\|_{L^{5/4}(Q(1))} < \ep.
\end{equation}
\end{remark}
	
For the boundary case \eqref{NSB} we have the following theorem.

\begin{theorem}
\label{the1.2}
The upper box dimension of  $\mathcal{S}_{b}$ in \eqref{NSB} is at most $10/9.$
\end{theorem}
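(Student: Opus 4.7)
The plan is to deduce Theorem \ref{the1.2} from a boundary counterpart of Proposition \ref{boxprop}, namely Proposition \ref{boxpropb}, via a Vitali covering argument, and in turn to prove Proposition \ref{boxpropb} from a boundary one-scale $\ep$-regularity criterion Theorem \ref{brc} by the iteration laid out after \eqref{specia2}. The numerical improvement from $7/6$ to $10/9$ is driven by the no-slip condition $u|_{\partial \mathbb{R}^{3}_{+}}=0$, which provides extra decay for the pressure near the boundary.

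First I would establish Theorem \ref{brc} in the form
\[
\norm{\nabla u}_{L^2(Q^+(1))} + \norm{u}_{L^2(Q^+(1))} + \|\nabla \Pi\|_{L^{1,\lambda}(Q^+(1))} < \ep
\]
for a suitable exponent $\lambda$. This would follow from the half-space version of \eqref{gktb} (or of \eqref{optical}) combined with a boundary Poincaré--Sobolev inequality, which is available because $u$ vanishes on the flat face of $\partial Q^+(1)$; the latter is what lets us replace $\norm{u}_{L^{2,6}(Q^+(1))}$ by $\norm{\nabla u}_{L^2(Q^+(1))}$ without having to add a $\norm{u}_{L^{\infty,2}}$ term, in the spirit of \eqref{special}.

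Second, I would iterate Theorem \ref{brc} following the scheme used for Proposition \ref{boxprop}. Interpolation gives control of $(\theta\rho)^{-3/2}\norm{u}_{L^2(Q^+(\theta\rho))}$ by the integral quantity appearing in Proposition \ref{boxpropb}. The essential new ingredient is a pressure decay estimate up to the boundary: writing $\Delta \Pi = -\Div\Div(u\otimes u)$ on $\mathbb{R}^{3}_{+}$, performing a harmonic/non-harmonic split, and using suitable reflections dictated by $\Div u = 0$ and $u|_{x_3=0}=0$ should yield a decay for $\Pi - \overline{\Pi}_{B^+(x,\theta\rho)}$ with a strictly better power of $\theta$ than in the interior. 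Feeding this sharpened decay back into the iteration should give Proposition \ref{boxpropb} in a form such as
\[
\int_{Q^+(z,r)} |\nabla u|^2 + |u|^{10/3} + |\Pi - \overline{\Pi}_{B^+(x,r)}|^{5/3} + |\nabla \Pi|^{5/4}\, dxds < r^{29/18 - \gamma}\,\ep_2
\]
for every $\gamma < 1/2$; note that $29/18 - 1/2 = 10/9$.

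Third, the step from Proposition \ref{boxpropb} to Theorem \ref{the1.2} is the standard Vitali covering argument. By contrapositive, every $z \in \mathcal{S}_b$ satisfies $\int_{Q^+(z,r)} (\cdots) \ge r^{29/18 - \gamma}\,\ep_2$ for all $r < r_0$. Applying the $5$-times covering lemma to $\{Q^+(z,r)\}_{z\in\mathcal{S}_b}$ extracts a disjoint subfamily $\{Q^+(z_i,r)\}_{i=1}^{N}$ whose fivefold enlargements still cover $\mathcal{S}_b$. Summing the pointwise lower bounds and using disjointness together with the global finiteness of $\int_{Q^+(1)} |\nabla u|^2 + |u|^{10/3} + |\Pi|^{5/3} + |\nabla \Pi|^{5/4}$ (which holds for a boundary suitable weak solution after the usual global pressure decomposition) gives $N \le C r^{\gamma - 29/18}$, hence the covering number $\mathcal{N}(r) \le C' r^{\gamma - 29/18}$. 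Letting $\gamma \nearrow 1/2$ yields $\dim_B(\mathcal{S}_b) \le 10/9$.

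The main obstacle will be the sharp pressure estimate up to the boundary. Translation invariance and free harmonic decompositions are unavailable in the half-space, so $\Pi$ must be split into a harmonic part whose decay is genuinely improved by the no-slip data on $u$, and a Newtonian potential part driven by $u \otimes u$. Tracking both pieces without losing any scaling margin is what converts the interior exponent $7/6$ into the boundary exponent $10/9$, and is where the delicate work lies.
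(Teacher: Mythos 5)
Your skeleton (one-scale criterion, iteration, Vitali covering) is the same as the paper's, and your reparametrization $r^{29/18-\gamma}$ with $\gamma<1/2$ versus the paper's $r^{5/3-\gamma}$ with $\gamma\le 5/9$ is immaterial---both land at $10/9$. The genuine gap is in the mechanism you propose for the boundary gain. First, your step-one criterion involves $\|\nabla\Pi\|_{L^{1,\lambda}(Q^{+}(1))}$, i.e.\ time exponent $1$; this is precisely what is \emph{not} available near the boundary. All known boundary pressure estimates (Seregin, Gustafson--Kang--Tsai, and Lemma \ref{preest:100} here) go through linear Stokes/parabolic regularity in the half space, which fails in $L^{1}_{t}$; that is why the paper's Theorems \ref{brc} and \ref{brct} and the quantity $\tilde{D}_1$ keep the time exponent $\lambda>1$ from \eqref{gktc1} and obtain the improvement by letting $\lambda$ tend to $1$, never reaching $\lambda=1$. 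Second, your claimed pressure decay ``with a strictly better power of $\theta$ than in the interior'' via reflection is unsupported and very likely false: with no-slip data the pressure satisfies an inhomogeneous Neumann condition ($\partial_{3}\Pi=\Delta u_{3}\neq 0$ on $\{x_{3}=0\}$ in general), so odd/even reflection does not apply, and the paper's boundary decay \eqref{pbd} has exactly the same powers $\rho/r$ and $r/\rho$ as the interior estimate \eqref{presure}.

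Where the improvement actually comes from is different and is missing from your outline. (a) The new boundary criterion Theorem \ref{brct} (deduced from Theorem \ref{brc}, which in turn rests on the Caccioppoli-type estimate of Proposition \ref{lebp}, proved by Giaquinta's iteration lemma) requires smallness only of $\nabla u$ and of the pressure oscillation---no $\|u\|_{L^{2}}$ term---because $u$ vanishes on the flat boundary and the Poincar\'e inequality needs no average correction. This removes the interior bottleneck: in the interior one must also control $E_{2}(\theta\rho)\lesssim\theta^{-1}E(\rho)+\theta^{2}A(\rho)$ from \eqref{cp}, and it is the $\theta^{2}A(\rho)$ term that forces $\gamma\le 1/2$, i.e.\ the bound $7/6$. (b) In the boundary pressure decay \eqref{pbd} the dangerous term is $\theta^{-1}E^{1/\lambda}(\rho)A^{1-1/\lambda}(\rho)$; choosing $\lambda$ close to $1$ makes its $A$-dependence negligible, and the balance of exponents in \eqref{E62} then allows $\gamma\le 5/9$, whence $5/3-5/9=10/9$. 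So the no-slip condition does drive the gain, as you guessed, but through the removal of the velocity term from the $\ep$-regularity criterion and the limit $\lambda\downarrow 1$ in the pressure norm, not through a sharpened potential-theoretic decay of $\Pi$; without Theorem \ref{brc} and Proposition \ref{lebp} your iteration cannot close.
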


\begin{remark}
The bound $10/9$ is better than the previous result obtained in \cite{[CY18]}.
It is a direct consequence of the following regularity criterion.
\end{remark}

\begin{prop}
\label{boxpropb}
Suppose that $(u, \,\Pi)$ is a suitable weak solution to (\ref{NSB}).
Then there exist positive numbers $\ep_2$ and $r_0<1$ such that $z$ is a regular point if for some $0<r<r_0$
\begin{equation}
\label{condb}
\int_{ Q^{+} (x,t,r)} |\nabla u |^{2} +| u |^{ 10/3}+|\Pi-\overline{\Pi}_{ B^{+} (x,r)} |^{ 5/3} + |\nabla \Pi| ^{5/4}dxds
< r^{10/9}\ep_{2}.
\end{equation}
\end{prop}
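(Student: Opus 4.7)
\medskip

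The plan is to mirror the strategy sketched for Proposition \ref{boxprop}, replacing every ingredient by its half-space counterpart and starting the iteration from the boundary $\varepsilon$-regularity criterion Theorem \ref{brc}. Concretely, I would first reformulate the hypothesis \eqref{condb} as smallness of the averaged scaled quantities
\[
A^{+}(\rho) = \rho^{-1}\|\nabla u\|_{L^{2}(Q^{+}(z,\rho))}^{2},\quad
E^{+}(\rho) = \rho^{-5/3}\|u\|_{L^{10/3}(Q^{+}(z,\rho))}^{10/3},
\]
\[
P^{+}(\rho) = \rho^{-5/3}\|\Pi-\overline{\Pi}_{B^{+}(x,\rho)}\|_{L^{5/3}(Q^{+}(z,\rho))}^{5/3},\quad
G^{+}(\rho) = \rho^{-5/4}\|\nabla\Pi\|_{L^{5/4}(Q^{+}(z,\rho))}^{5/4},
\]
so that \eqref{condb} becomes $A^{+}(r)+E^{+}(r)+P^{+}(r)+G^{+}(r)<r^{-5/9}\varepsilon_{2}$ for some $r<r_{0}$.

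Next I would bound the scaled $L^{2}$ norm $(\theta\rho)^{-3/2}\|u\|_{L^{2}(Q^{+}(\theta\rho))}$ through a half-space interpolation inequality analogous to \eqref{cp}; the no-slip boundary condition $u|_{\partial\mathbb{R}^{3}_{+}}=0$ allows me to dispense with $\|u\|_{L^{\infty,2}}$ in the first term, exactly as in the interior argument. Then I would establish a boundary pressure decay estimate controlling $(\theta\rho)^{-1}\|\nabla\Pi\|_{L^{5/4}(Q^{+}(\theta\rho))}$ by a combination of $G^{+}(\rho)$ on the parent cylinder and a harmonic (Neumann) correction term. The Neumann data on $\{x_{3}=0\}$ comes from the normal component of the momentum equation, i.e.\ from $\partial_{3}u\cdot n$ on the boundary, which is controlled by $\|\nabla u\|_{L^{2}}$; after scaling one obtains a loss compared to the interior. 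Together with the interpolation step this yields smallness of
\[
(\theta\rho)^{-3/2}\|u\|_{L^{2}(Q^{+}(\theta\rho))}+(\theta\rho)^{-1}\Bigl(\|\nabla\Pi\|_{L^{5/4}(Q^{+}(\theta\rho))}+\|\nabla u\|_{L^{2}(Q^{+}(\theta\rho))}\Bigr)
\]
provided $\theta$ is chosen small (to absorb harmonic decay factors) and then $\varepsilon_{2}$ correspondingly small.

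To close the argument I would feed these four smallness estimates into the boundary $\varepsilon$-regularity criterion Theorem \ref{brc} (the half-space analogue of \eqref{special}, whose hypotheses involve $\|\nabla u\|_{L^{2}(Q^{+})}$, $\|u\|_{L^{2}(Q^{+})}$, and $\|\nabla\Pi\|_{L^{1,3/2}(Q^{+})}$) at the scale $\theta\rho$, using H\"older to pass from $L^{5/4}$ in time to $L^{1,3/2}$. The exponent $10/9$ on the right-hand side of \eqref{condb} has to be exactly the balance point at which all four scaled quantities land in the smallness regime simultaneously: the interior proof would give $5/3-\gamma$ with $\gamma<1/2$, and the extra $1/2-(5/3-10/9)=\textstyle 1/2-5/9$ worth of loss is precisely what the Neumann pressure estimate sacrifices.

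The main obstacle, as in \cite{[CY18]} and \cite{[Seregin]}, will be the boundary pressure decomposition. Splitting $\Pi=\Pi_{1}+\Pi_{2}$ with $\Pi_{1}$ solving an inhomogeneous equation with good decay and $\Pi_{2}$ harmonic with Neumann data, the harmonic part only decays like a power less than the interior Campanato-type rate, and carefully tracking that power is what fixes the exponent $10/9$ in \eqref{condb}. Once the pressure decay lemma is stated in the correct scale-invariant form, the rest is a routine adaptation of the interior iteration: choose $\theta$ small to dominate non-linear contributions, then choose $\varepsilon_{2}$ small relative to the $\varepsilon$ of Theorem \ref{brc}, and conclude regularity of $z$.
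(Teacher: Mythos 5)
Your skeleton (scaled quantities, local energy inequality, interpolation and pressure decay at scale $\theta\rho$, then a one-scale $\ep$-regularity criterion) is indeed the paper's, but the two ingredients you lean on are, respectively, unavailable and pointing in the wrong direction. The criterion you plan to close with is a half-space analogue of \eqref{special}, whose pressure part is $\|\nabla\Pi\|_{L^{1,3/2}(Q^{+})}$, together with a boundary decay estimate for that same quantity. Neither exists: near the boundary the pressure estimates come from parabolic regularity theory for the Stokes system (Gustafson--Kang--Tsai), which is not applicable to $L^{1}$ integrability in time --- this is precisely the caveat in the remark following Lemma \ref{presurelk}, and it is why the paper never leaves the GKT scale $(\lambda,\kappa^{\ast})$ of \eqref{gktc1} with $\lambda>1$ in the boundary argument. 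The paper's actual route is: prove Theorem \ref{brc} (hence Theorem \ref{brct}), use Lemma \ref{preest:100} (estimate \eqref{pbd}) for $\tilde D_{1}$, run the iteration for $E(\theta\rho)+\tilde D_{1}(\theta\rho)$ with $\theta=\rho^{\beta}$, $\beta=\frac{1}{4\lambda}-\frac{1}{12}-\frac{\gamma}{10}$, and observe that all powers of $\rho$ in \eqref{E62} are nonnegative iff $\gamma$ lies below a $\lambda$-dependent threshold that tends to $5/9$ as $\lambda\to1^{+}$. This limit $\lambda\to1^{+}$ is the mechanism that produces the exponent $5/3-\gamma=10/9$ in \eqref{condb}; your proposal contains nothing playing its role, so even after repairing the $L^{1}_{t}$ issue you would not reach the stated exponent.

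Relatedly, your accounting of where $10/9$ comes from is backwards. You present it as the interior exponent $5/3-\gamma$, $\gamma<1/2$, degraded by a ``loss'' from the harmonic/Neumann part of the pressure, and you quantify that loss as $1/2-5/9$ --- which is negative. In fact $10/9<7/6$: the boundary bound is \emph{better} than the interior one, and the gain comes from the no-slip condition, not from the pressure. Concretely: (i) vanishing of $u$ on $\{x_{3}=0\}$ gives the Poincar\'e inequality, so the linear term in the local energy inequality is bounded directly by $\int|\nabla u|^{2}\lesssim\ep_{2}\rho^{5/3-\gamma}$ (see \eqref{loc1.2}); (ii) the boundary interpolation inequality \eqref{C3} has no lower-order term $A^{p/2}$, unlike \eqref{cp}; and (iii) most importantly, Theorem \ref{brct} requires no smallness of $\|u\|_{L^{2}}$ at all, whereas the interior criterion \eqref{special} does, and it is exactly the need to control $E_{2}(\theta\rho)$ through \eqref{cp}, with its $A(\rho)$ term, that pins the interior case at $\gamma\le1/2$; at the boundary $A(\rho)$ survives only in the factor $A^{1-1/\lambda}(\rho)$ of \eqref{pbd}, whose exponent is sent to zero as $\lambda\to1^{+}$. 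A proof organized around your ``Neumann loss'' heuristic would, if anything, end with an exponent larger than $7/6$, not with $10/9$.
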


Compared with the proof of Proposition \ref{boxprop}, the proof of Proposition \ref{boxpropb} includes a new ingredient of an application of new $\ep$-regularity  criteria below.

\begin{theorem}
\label{brct}
Suppose that $(u, \Pi)$ is a suitable weak solution to the 3D Navier--Stokes equations \eqref{NSB} in $Q^{+}(1)$.
Then there exists an absolute positive constant $\ep$ such that $z=0$ is a regular point if
\begin{equation}
\label{opticalbt}
\norm{\nabla u}_{L^{p,q}(Q^{+}(1))}+
\|\Pi-\overline{\Pi}_{B^+_{x,r}}(s)\|_{L^{\lambda,\kappa^{\ast}}(Q^{+}(1))} <\ep
\end{equation}
for some $p, q$ satisfying $2\leq 2/q+3/p <3$ and $1\leq p,\,q\leq\infty$.
\end{theorem}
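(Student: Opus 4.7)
The plan is to reduce Theorem \ref{brct} to the velocity-based boundary $\ep$-regularity criterion \eqref{gktb} of Gustafson--Kang--Tsai (or to its Lorentz-type boundary analog of \eqref{optical}) via a Poincar\'e--Sobolev inequality that exploits the no-slip boundary condition $u|_{x_3 = 0} = 0$. In short, the $\nabla u$ criterion should follow from the $u$ criterion by trading one derivative for one power of Sobolev scaling, which is legal at the boundary precisely because $u$ vanishes on the flat face $\{x_3 = 0\}$.

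First I would show that, since $u$ vanishes on the flat part of $\partial B^{+}(1)$, a Hardy inequality in the normal direction yields, for each fixed time $s$, a bound of the form
\[
\|u(\cdot, s)\|_{L^{p^{*}}(B^{+}(1))} \le C\,\|\nabla u(\cdot, s)\|_{L^{p}(B^{+}(1))}, \qquad p^{*} = \frac{3p}{3-p},
\]
when $p < 3$ (and a bound into any $L^{r}$, $r < \infty$, when $p \ge 3$ via Sobolev embedding). Raising to the $q$-th power in time and integrating gives $\|u\|_{L^{p^{*}, q}(Q^{+}(1))} \le C\,\|\nabla u\|_{L^{p, q}(Q^{+}(1))}$. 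The hypothesis $2/q + 3/p \in [2, 3)$ transforms into $2/q + 3/p^{*} = 2/q + 3/p - 1 \in [1, 2)$, which is precisely the admissible range of the boundary analog of \eqref{optical}.

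Next I would apply this velocity-pressure boundary criterion to the pair $(u,\, \Pi - \overline{\Pi}_{B^{+}(r)})$: since the equations \eqref{NSB} are invariant under $\Pi \mapsto \Pi + c(s)$, subtracting the slice average costs nothing, and the quantity $\|\Pi - \overline{\Pi}_{B^{+}(r)}\|_{L^{\lambda, \kappa^{*}}(Q^{+}(1))}$ assumed small in \eqref{opticalbt} is exactly the pressure norm required by \eqref{gktb} together with \eqref{gktc1}. Combining smallness of $\|u\|_{L^{p^{*}, q}}$ from the Poincar\'e--Sobolev step with smallness of $\|\Pi - \overline{\Pi}\|_{L^{\lambda, \kappa^{*}}}$ would then yield, through the existing boundary criterion, H\"older continuity of $u$ near $z = 0$.

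The hard part will be verifying scaling compatibility across the Poincar\'e--Sobolev reduction: after the parabolic rescaling $(x, t) \mapsto (rx, r^{2} t)$, the weights picked up by $\nabla u \mapsto u$ in Hardy--Sobolev must cohere with the $(\lambda, \kappa^{*})$ scaling of the pressure from \eqref{gktc1}, and one needs to verify that no parasitic factor of $r$ spoils the smallness. A secondary technical point is the endpoint $2/q + 3/p = 2$, where $p^{*} = \infty$ and Sobolev embedding lands in $BMO$ rather than $L^{\infty}$; this one handles either by applying the argument at a slightly subcritical pair $(p, q)$ and absorbing the excess into $\ep$, or by directly invoking an $L^{\infty, q}$-based boundary criterion. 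These points are technical but routine within the structure of the existing boundary $\ep$-regularity literature.
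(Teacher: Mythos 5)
Your reduction is essentially the paper's own proof of Theorem \ref{brct}: the paper disposes of this theorem in one line, saying that the Poincar\'e--Sobolev inequality (which works here precisely because of the no-slip condition on the flat face) converts smallness of $\norm{\nabla u}_{L^{p,q}(Q^{+}(1))}$ with $2\le 2/q+3/p<3$ into smallness of $\norm{u}_{L^{p^{*},q}(Q^{+}(1))}$ with $2/q+3/p^{*}=2/q+3/p-1\in[1,2)$, and then invokes the velocity-based criterion. Your Hardy-inequality implementation and your range arithmetic match this exactly.

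The one point you should be careful about is what you call ``the existing boundary criterion.'' The criterion you actually need --- smallness of $\norm{u}_{L^{p^{*},q}}$ for a \emph{general} admissible pair together with the $L^{\lambda,\kappa^{*}}$ pressure norm --- is not in the prior literature: it is precisely Theorem \ref{brc} of this paper, whose proof (via the Caccioppoli-type estimate of Proposition \ref{lebp} combined with \eqref{gktb}) is the real work of Section 6. Your fallback of reducing directly to the Gustafson--Kang--Tsai criterion \eqref{gktb} does not work in general: \eqref{gktb} requires smallness of $\norm{u}_{L^{3}(Q^{+}(1))}$, and on a bounded cylinder H\"older's inequality can only lower exponents, so $\norm{u}_{L^{p^{*},q}}$ controls $\norm{u}_{L^{3}}$ only when both $p^{*}\ge 3$ and $q\ge 3$; a large part of the admissible range (e.g.\ any $q<3$) is not covered. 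So your argument is complete exactly insofar as it cites Theorem \ref{brc}, as the paper does; as a self-contained proof it would still owe the proof of that criterion. A minor slip: $p^{*}=\infty$ occurs at $p=3$, not at the endpoint $2/q+3/p=2$; that endpoint maps harmlessly to $2/q+3/p^{*}=1$, which lies inside the range of Theorem \ref{brc}. (The genuinely awkward corner is $q=1$, $p>3$, where the time exponent cannot be raised --- a case the paper's one-line reduction glosses over as well.)
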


\begin{remark}
A special case is
\[
\norm{\nabla u}_{L^{2}(Q^{+}(1))}+\|\nabla\Pi \|_{L^{5/4,5/4}(Q^{+}(1))} <\ep,
\]
which can be used to show that $\dim_{B}(\mathcal{S}_{b})\leq5/4$.
\end{remark}

The Poincar\'e--Sobolev inequality guarantees Theorem \ref{brct} follows from the next theorem, which is of independent interest.

\begin{theorem}
\label{brc}
Suppose that $(u, \Pi)$ is a suitable weak solution to the 3D Navier--Stokes equations \eqref{NSB} in $Q^{+}(1)$.
Then there exists an absolute positive constant $\ep$ such that $z=0$ is a regular point if
\begin{equation}
\label{opticalb}
\norm{u}_{L^{p,q}(Q^{+}(1))}+
\|\Pi-\overline{\Pi}_{B^+_{x,r}}(s)\|_{L^{\lambda,\kappa^{\ast}}(Q^{+}(1))} <\ep
\end{equation}
for some $p, q$ satisfying $1\leq 2/q+3/p <2$ and $1\leq p,\,q\leq\infty$.
\end{theorem}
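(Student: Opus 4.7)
The approach is to adapt to the half-space the interior proof of \eqref{optical} carried out in \cite{[HWZ]}, replacing the pressure $L^1$ norm by the GKT-type mixed Lorentz norm $L^{\lambda,\kappa^\ast}$ from \cite{[GKT]} and using cut-off test functions that respect the no-slip boundary. The target is to show, by an iteration on scales, that the hypothesis \eqref{opticalb} forces $\norm{u}_{L^3(Q^+(r))} + \norm{\Pi-\overline{\Pi}_{B^+_{x,r}}(s)}_{L^{\lambda,\kappa^\ast}(Q^+(r))}$ to fall below the $\ep$ in \eqref{gktb} at some small $r$, whence regularity follows.

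After rescaling it suffices to treat $(p,q)$ with $2/q+3/p$ close to $2$. Define the scaled quantities
\[
A(r) = r^{-1}\sup_s \int_{B^+(x,r)} |u|^2,\qquad E(r) = r^{-1}\int_{Q^+(x,t,r)}|\nabla u|^2,\qquad C(r) = r^{-2}\int_{Q^+(x,t,r)} |u|^3,
\]
and let $D(r)$ be the scale-invariant version of the $L^{\lambda,\kappa^\ast}$ norm of $\Pi-\overline{\Pi}_{B^+_{x,r}}(s)$ on $Q^+(x,t,r)$. Plugging a cut-off $\phi=\phi(|x'|,x_3)$ into the boundary local energy inequality and using the identity $\int_{B^+(r)} u\cdot\nabla\phi=0$ (which uses both $\Div u=0$ and $u|_{\partial\R^3_+}=0$), the pressure in the cubic term may be replaced by $\Pi-\overline{\Pi}_{B^+_{x,r}}(s)$. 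H\"older in the mixed Lorentz scales and Sobolev interpolation then bound the nonlinear terms by a positive power of $\norm{u}_{L^{p,q}(Q^+(r))}+D(r)$ times lower-order pieces of $A$ and $E$.

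Next comes the pressure decay. I would decompose $\Pi=\Pi_1+\Pi_2$, where $\Pi_1$ solves a localized elliptic problem with source $\partial_i\partial_j(u_iu_j)$ in $\R^3_+$ under the natural boundary condition for $\Pi$, and $\Pi_2$ is harmonic in $B^+(r)$. Calder\'on--Zygmund estimates in $L^{\lambda,\kappa^\ast}$ (as in \cite{[GKT]}) control $\Pi_1$ by a quadratic power of $\norm{u}_{L^{2p,2q}(Q^+(r))}$, while harmonicity of $\Pi_2$ combined with mean subtraction gives $\norm{\Pi_2-\overline{\Pi_2}}_{L^{\lambda,\kappa^\ast}(Q^+(\theta r))}\le C\theta^{a_1}\norm{\Pi_2-\overline{\Pi_2}}_{L^{\lambda,\kappa^\ast}(Q^+(r))}$ for some $a_1>0$. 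Combining these with the energy inequality produces a recursive estimate of the form
\[
\Phi(\theta r)\le C\theta^{a_0}\Phi(r)+C\theta^{-b_0}\bigl(\norm{u}_{L^{p,q}(Q^+(r))}+D(r)\bigr)^{\gamma_0}
\]
for $\Phi(r)=C(r)^{1/3}+D(r)$, with $a_0>0$ precisely because the hypothesis imposes the strict inequality $2/q+3/p<2$. Choosing $\theta$ small and then $\ep$ small, iterating this inequality yields $\Phi(r_\ast)<\ep_{\mathrm{GKT}}$ at some $r_\ast>0$, and \eqref{gktb} concludes.

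The main obstacle is the pressure step. Unlike the interior case, the harmonic correctors near the flat boundary couple tangential and normal directions, and they have to be estimated in the anisotropic norm $L^{\lambda,\kappa^\ast}$ sharply enough to produce a genuinely positive power of $\theta$ in the decay of $\Pi_2$, uniformly over the one-parameter family of admissible $(p,q)$. Tracking and matching the scaling exponents throughout the iteration while keeping $a_0>0$ is the technical heart of the argument.
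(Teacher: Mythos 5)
Your first step is on target: plugging a cut-off into the boundary local energy inequality and using H\"older together with the $L^{\infty,2}$/$L^{2,6}$ interpolation to control the nonlinear terms, absorbing the energy pieces, is precisely the heart of the paper's proof (its Proposition \ref{lebp}, a Caccioppoli-type inequality proved via Young's inequality and Giaquinta's iteration lemma over intermediate radii $R/2\le r<\rho\le R$; the strict inequality $2/q+3/p<2$, i.e.\ $\alpha>1$, is exactly what makes the Young absorption possible, with exponent $2\alpha/(\alpha-1)<\infty$). The genuine gap is in the multi-scale iteration you build on top of it. The hypothesis norm is supercritical in this range: its scale-invariant counterpart is $r^{1-(2/q+3/p)}\norm{u}_{L^{p,q}(Q^{+}(r))}$, and since $2/q+3/p\geq 1$ the hypothesis \eqref{opticalb} gives no smallness of this quantity as $r\to 0$. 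Consequently your claimed recursion $\Phi(\theta r)\le C\theta^{a_0}\Phi(r)+C\theta^{-b_0}\bigl(\norm{u}_{L^{p,q}(Q^{+}(r))}+D(r)\bigr)^{\gamma_0}$ cannot hold with an $r$-independent constant: writing out the H\"older step that produces the source term, the mismatch between the raw norm and its scale-invariant version forces a negative power of $r$ (for instance the cubic term produces a factor of order $r^{-(1+\alpha)/2}\ep^{\alpha}(A(r)+E(r))^{(3-\alpha)/2}$), which blows up along $r=\theta^{k}\to 0$, so the iteration does not close. The strict inequality $2/q+3/p<2$ does not rescue this; it buys absorbability at one scale, not decay across scales. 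Moreover, the starting value $\Phi(1)$ is controlled by the solution's energy, not by $\ep$, so the number of affordable steps (and hence your $\ep$) would be solution-dependent, contradicting the requirement that $\ep$ be absolute.

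The second point is that the iteration, and with it your self-declared main obstacle (boundary pressure decay in $L^{\lambda,\kappa^{\ast}}$), is unnecessary. The pressure norm in \eqref{opticalb} is exactly the norm appearing in the Gustafson--Kang--Tsai criterion \eqref{gktb}, so it never needs to be decayed or improved, only passed through. This is what the paper does: Proposition \ref{lebp} at $R=1$ gives $\norm{u}^{2}_{L^{\infty,2}(Q^{+}(1/2))}+\norm{\nabla u}^{2}_{L^{2}(Q^{+}(1/2))}\lesssim\ep^{2}$; the interpolation inequality \eqref{sampleinterplation} together with H\"older on the bounded cylinder then yields $\norm{u}_{L^{3}(Q^{+}(1/2))}\lesssim\ep$, while $\|\Pi-\overline{\Pi}_{B^{+}(1/2)}\|_{L^{\lambda,\kappa^{\ast}}(Q^{+}(1/2))}\lesssim\ep$ follows trivially from the hypothesis; applying \eqref{gktb} rescaled to $Q^{+}(1/2)$ finishes the proof in a single step. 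In short, your scheme is either impossible to start (without the Caccioppoli bound there is no control of $\Phi(1)$ by $\ep$) or unnecessary to continue (with it, the GKT threshold is already met at the fixed scale $1/2$).
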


\begin{remark}
Theorem \ref{brc} is an improvement of \eqref{sb2} and \eqref{gktb}.
The proof is in part inspired by recent results \eqref{GP} and \eqref{optical}.
\end{remark}

\section{Notations and definitions}
\label{S3}

We denote by $L^{q}(-T,0; X)$, $1 \le q \le \infty$, the set of measurable functions on the interval $(-T,0)$ with values in $X$ and $\|f(t,\cdot)\|_{X} \in L^{q}(-T,0)$.
We denote by $z = (x,t) \in \mathbb{R}^{3} \times (-T,0)$ a space-time point and denote balls and cylinders by
\begin{align*}
B(x,r) &= \{y\in \mathbb{R}^{3} : |x-y|\leq r\}, \\
Q(z,r) &= B(x,r)\times(t-r^{2}, t).
\end{align*}

We recall the definition of suitable weak solutions to \eqref{NS}.

\begin{definition}
\label{defi}
A  pair   $(u, \Pi)$  is called a suitable weak solution to the Navier--Stokes equations \eqref{NS} provided the following conditions are satisfied:
\begin{enumerate}[(1)]
\item
\label{SWS1}
$u \in L^{\infty}(-T,\,0;\,L^{2}(\mathbb{R}^{3})) \cap L^{2}(-T,\,0;\,\dot{H}^{1}(\mathbb{R}^{3}))$, $\Pi \in
L^{3/2}(-T,\,0;L^{3/2}(\mathbb{R}^{3}))$.
\item
\label{SWS2}
$(u, \Pi)$~solves (\ref{NS}) in $\mathbb{R}^{3}\times (-T,\,0) $ in the sense of distributions.
\item
\label{SWS3}
$(u, \Pi)$ satisfies the following inequality, for a.e. $t\in(-T,0)$,
\begin{equation}
\label{loc}
\begin{split}
&\int_{\mathbb{R}^{3}} |u(x,t)|^{2} \phi(x,t) dx
+2\int^{t}_{-T}\int_{\mathbb{R} ^{3 }} |\nabla u|^{2}\phi  dxds \\
&\leq \int^{t}_{-T }\int_{\mathbb{R}^{3}} |u|^{2} (\partial_{s}\phi+\Delta \phi)dxds
+ \int^{t}_{-T } \int_{\mathbb{R}^{3}}u\cdot\nabla\phi (|u|^{2} +2\Pi)dxds,
\end{split}
\end{equation}
where non-negative function $\phi(x,s)\in C_{0}^{\infty}(\mathbb{R}^{3}\times (-T,0) )$.
\end{enumerate}
\end{definition}

In the light of the natural scaling property of the Navier--Stokes  equations, we introduce the following scaling invariant quantities;
\begin{equation}
\label{scaling}
\begin{split}
A(r) &= \sup_{t-r^{2}\leq s<t} r^{-1} \int_{B(x,r)}|u(y,s)|^{2}dy, \\
E(r) &= r^{-1}  \int_{Q(z,r)}|\nabla u(y,s)|^{2}dyds, \\
E_{p}(r) &= r^{p-5} \int_{Q(z,r)}|u(y,s)|^{p}dyds, \\
P_{5/4}(\nabla\Pi,r) &= r^{-1} \left(\int_{Q(z,r)}|\nabla\Pi|^{5/4}dyds\right)^{4/5}, \\
P_{1,5/4}(\nabla\Pi,r) &= r^{-1} \int^{t}_{t-r^{2}}\left(\int_{B(x,r)} |\nabla\Pi|^{5/4}dy\right)^{4/5}ds.
\end{split}
\end{equation}

For any boundary point $z=(x,t)=(x_{1},x_{2},0,t) \in \partial\R^3_+\times (0,T)$, we denote half balls and half cylinders by
\begin{align*}
B^{+}(x,r) &=\{y \in B(x,r): y_3>0 \}, \\
Q^{+}(z,r) &=\{(y,t) \in Q(z,r): y_3>0 \}.
\end{align*}

We recall the definition of boundary suitable weak solutions to \eqref{NSB}.

\begin{definition}
\label{sws-3dnse}
Let $\Omega=\R^3_+$ and $Q_T=\R^3_+\times [-T,0)$. A pair of
$(u,\Pi)$ is a suitable weak solution to \eqref{NSB} if the
following conditions are satisfied:
\begin{enumerate}[(1)]
\item  The functions $u: Q_T\rightarrow \mathbb{R}^3$ and $\Pi : Q_T \rightarrow \mathbb{R}$ satisfy
\begin{align*}
&u \in L^{\infty}\left(I;L^{2}(\Omega)\right)\cap L^{2}\left(I;W^{1,2}(\Omega)\right), \quad
\Pi \in L^{\lambda}\left(I;L^{\kappa^*}(\Omega)\right), \\
&\nabla^{2}u \in L^{\lambda}\left(I;L^{k}(\Omega)\right),\quad
\nabla \Pi \in L^{\lambda}\left(I;L^{\kappa}(\Omega)\right),
\end{align*}
where $\lambda$, $\kappa$ and $\kappa^*$ are fixed numbers satisfying
\begin{equation}
\label{gktc1}
1<\lambda<2,\qquad
\frac{2}{\lambda}+\frac{3}{\kappa}=4,\qquad
\frac{1}{\kappa^*}=\frac{1}{\kappa}-\frac{1}{3}.
\end{equation}
\item
$(u, \Pi)$ solves the Navier--Stokes equations in $Q_T$ in the sense of distributions and $u$   satisfy the boundary conditions  in the sense of traces.
\item
$u$ and $\Pi$ satisfy the local energy inequality
\begin{align*}
&\int_{B^{+}{(x,r)}}|u(x,t)|^2 \phi(x,t) dx
+2\int_{t_0}^t\int_{B^{+}{(x,r)}} |\nabla u(x,s)|^{2} \phi(x,s) dx ds \\
&\leq \int_{t_0}^t\int _{B^{+}{(x,r)}} |u|^{2} (\partial_s\phi+\Delta \phi)dxds+ \int_{t_0}^t
\int_{B^{+}{(x,r)}}(|u|^2 + 2\Pi) u\cdot\nabla \phi dxds
\end{align*}
for all $t\in I=(0,T)$ and for all nonnegative function $\phi \in
C_0^{\infty}(\R^3\times R)$.
\end{enumerate}
\end{definition}

We shall use the same scaling invariant quantities \eqref{scaling} for the boundary case replacing $B(x,r)$ and $Q(z,r)$ by $B^{+}(x,r)$ and  $Q^{+}(z,r)$ in \eqref{scaling}.
Readers can tell them in the context.
In addition, we need the following scaling invariant quantities involving the pressure
\begin{align}
\label{tild:p10}
\tilde{D}(r) &= r^{-1} \left(\int_{t-r^2}^{t}
\left(\int_{B^+(x,r)}|\Pi(y,s)-\overline{\Pi}_{B^+(x,r)}(s)|^{\kappa^*}
d y\right)^{\frac{\lambda}{\kappa^*}} ds\right)^{\frac{1}{\lambda}}, \\
\label{tild:p20}
\tilde{D}_1(r) &= r^{-1} \left(\int_{t-r^2}^{t}
\left(\int_{B^+(x,r)}|\nabla \Pi(y,s)|^{\kappa}d y\right)^{\frac{\lambda}{\kappa}} ds\right)^{\frac{1}{\lambda}},
\end{align}
where we have used the notation $\overline{f}_{E}$ which is the average of $f$ over the set $E$.

For $\kappa^*$ and $\lambda$ in \eqref{gktc1}, we denote their
H\"older conjugate $i$ and $j$ through the relations
\[
\frac{1}{i}=1-\frac{1}{\lambda}, \qquad
\frac{1}{j}=1-\frac{1}{\kappa^*}.
\]
Hence, it follows from \eqref{gktc1} that
\begin{equation}
\label{reg:20}
\frac{2}{i}+\frac{3}{j}=2,\qquad 2<i<\infty.
\end{equation}

We end this section by giving a few more notations.
When the center of a ball or a cylinder is located at the origin, we put them simply as
\begin{align*}
B(r) = B(0,r) &\qand Q(r) = Q(0,r) \\
B^{+}(r) = B^{+}(0,r) &\qand Q^{+}(r) = Q^{+}(0,r).
\end{align*}
For simplicity, we also write $\overline{f}_{r} = \overline{f}_{B(r)}$ and
\begin{align*}
\norm{f}_{L^{p,q}(Q(r))} &=\norm{f}_{L^{p}(-r^{2},\,0;\,L^{q}(B(r)))}, \\
\norm{f}_{L^{p}(Q(r))} &= \norm{f}_{L^{p,p}(Q(r))}.
\end{align*}
We shall use the summation convention on repeated indices.
We shall use the notation $A \lesssim B$ if there is a generic positive constant $C$ such that $|A| \le C|B|$.
The generic positive constants $C$ may be different from line to line unless otherwise stated.

\section{Auxiliary lemmas}
\label{S4}

In this section, we present interpolation inequalities and decay estimates for the pressure.

\begin{lemma}[\cite{[HWZ],[RWW]}]
\label{lemma2.1i}
For $0< r \leq \rho/2$ and $2\leq p\leq10/3$,
\begin{equation}
\label{cp}
E_{p}(r) \lesssim \left(\frac{\rho}{r}\right)^{p/2} A^{(p-2)/2}(\rho) E(\rho)
+ \left(\frac{r}{\rho}\right)^{p} A^{p/2}(\rho)
\end{equation}
for the interior case and
\begin{equation}
\label{C3}
E_{p}(r) \lesssim \left(\frac{\rho}{r}\right)^{p/2} A^{(p-2)/2}(\rho) E(\rho)
\end{equation}
for the boundary case, where the implied positive constants does not depend on $r$ and $\rho$.
\end{lemma}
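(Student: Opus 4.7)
The plan for the interior inequality \eqref{cp} is to decompose $u$ on the containing ball $B(\rho)$ as $u = w + \overline{u}_\rho$ with $w = u - \overline{u}_\rho$, and to estimate the two pieces separately. The mean satisfies the pointwise bound $|\overline{u}_\rho|^2 \lesssim \rho^{-3}\|u(\cdot,s)\|_{L^2(B(\rho))}^2 \leq \rho^{-2}A(\rho)$, so integrating $|\overline{u}_\rho|^p$ over $Q(r)$ gives $\int_{Q(r)}|\overline{u}_\rho|^p \,dy\,ds \lesssim r^5 \rho^{-p} A^{p/2}(\rho)$, which after multiplication by $r^{p-5}$ yields exactly the $(r/\rho)^p A^{p/2}(\rho)$ term. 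For the oscillating piece I would apply Gagliardo--Nirenberg on $B(\rho)$, $\|w\|_{L^p(B(\rho))} \lesssim \|w\|_{L^2(B(\rho))}^{1-\theta}\|w\|_{L^6(B(\rho))}^{\theta}$ with $\theta = 3(p-2)/(2p)$, together with Poincar\'e--Sobolev $\|w\|_{L^6(B(\rho))} \lesssim \|\nabla u\|_{L^2(B(\rho))}$.

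Time integration then proceeds by pulling $\sup_s\|u(\cdot,s)\|_{L^2(B(\rho))}^{p(1-\theta)} \leq (\rho A(\rho))^{p(1-\theta)/2}$ out of the time integral and applying H\"older to $\int_{t-r^2}^{t} \|\nabla u(\cdot,s)\|_{L^2(B(\rho))}^{p\theta}\,ds$. This step is legitimate precisely because $p\theta = 3(p-2)/2 \leq 2$ exactly on the stated range $p \in [2, 10/3]$, yielding the bound $r^{2-p\theta}(\rho E(\rho))^{p\theta/2}$. Tracking the powers of $\rho$ and $r$ and multiplying by $r^{p-5}$ produces the expected $(\rho/r)^{p/2}$ prefactor, with the raw $A$--$E$ factors appearing in the form $A^{(6-p)/4}E^{3(p-2)/4}$; this coincides with $A^{(p-2)/2}E(\rho)$ at the endpoint $p = 10/3$ (Ladyzhenskaya) and for general $p$ is rebalanced via Young's inequality into the stated form, the residual term being of $A^{p/2}$ type and absorbable into the companion term. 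For the boundary inequality \eqref{C3}, the crucial simplification is that $u = 0$ on the flat portion of $\partial B^+(\rho)$; Friedrichs--Poincar\'e removes the need for mean-subtraction entirely, so Gagliardo--Nirenberg applies directly to $u$ itself, no constant piece appears, the $(r/\rho)^p A^{p/2}$ remainder vanishes, and the identical time integration yields \eqref{C3}.

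The main obstacle I expect is not the interpolation or H\"older step (both routine) but the precise balancing of the $A$--$E$ exponents in the interior case. The natural GN output $A^{(6-p)/4}E^{3(p-2)/4}$ matches the claimed $A^{(p-2)/2}E$ only at $p = 10/3$, so for general $p \in [2, 10/3]$ one must invoke Young's inequality in the right way to arrive at the stated form, and verify that any extra term produced by Young's is dominated by the $(r/\rho)^p A^{p/2}$ piece so as not to weaken the overall estimate. In the boundary case this subtlety is absent because the Friedrichs inequality eliminates the second term altogether.
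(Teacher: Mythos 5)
Your derivation is correct up to and including the intermediate inequality: the mean part indeed yields $(r/\rho)^p A^{p/2}(\rho)$, and Gagliardo--Nirenberg, Poincar\'e--Sobolev and H\"older in time (legitimate exactly for $p\le 10/3$) yield
\begin{equation*}
E_p(r) \lesssim \left(\frac{\rho}{r}\right)^{p/2} A^{(6-p)/4}(\rho)\,E^{3(p-2)/4}(\rho)
+ \left(\frac{r}{\rho}\right)^{p} A^{p/2}(\rho).
\end{equation*}
The gap is the final rebalancing step, which you yourself flag as the main obstacle: it does not work as you claim. Writing $A^{(6-p)/4}E^{3(p-2)/4} = \left(A^{(p-2)/2}E\right)^{3(p-2)/4}\left(A^{p/2}\right)^{(10-3p)/4}$ and applying Young's inequality does give $A^{(6-p)/4}E^{3(p-2)/4} \lesssim A^{(p-2)/2}E + A^{p/2}$, but \emph{both} resulting terms inherit the prefactor $(\rho/r)^{p/2}$ of the term being rebalanced. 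The residual is therefore $(\rho/r)^{p/2}A^{p/2}$, which exceeds the companion term $(r/\rho)^{p}A^{p/2}$ by the factor $(\rho/r)^{3p/2}$; since $A$ and $E$ are independent quantities (consider data with $E\ll A$, e.g.\ flows nearly constant in space), no absorption is possible, and weighted Young only trades this defect for a strictly worse power of $\rho/r$ in front of $A^{(p-2)/2}E$. The weakened inequality would also destroy the application: in Step 3 of Section \ref{S5} the factor $\theta^{2}$ multiplying $A(\rho)$ is exactly what produces $E_2(\theta\rho)\lesssim \ep_1^{3/5}\rho^{1/2-\gamma}$; replacing it by $\theta^{-1}$ gives $\ep_1^{3/5}\rho^{-\gamma}$, unbounded as $\rho\to 0$. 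The boundary case exposes the problem most starkly: \eqref{C3} contains no $A^{p/2}$ term at all, so ``the identical time integration'' cannot yield it --- your route unavoidably produces an $A^{p/2}$ residual with nothing to absorb it into.

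The missing idea is to exploit the smallness of $B(r)$ inside the $L^2$ factor instead of discarding it. Besides $\int_{B(r)}|w|^2\,dy \le \rho A(\rho)$, one also has, by H\"older on $B(r)$ and Sobolev--Poincar\'e on $B(\rho)$,
\begin{equation*}
\int_{B(r)}|w|^2\,dy \le C r^{2}\left(\int_{B(r)}|w|^{6}\,dy\right)^{1/3}
\le C r^{2}\int_{B(\rho)}|\nabla u|^{2}\,dy .
\end{equation*}
Interpolating these two upper bounds ($X\le Y^{1-\tau}Z^{\tau}$ with $\tau=(10-3p)/(6-p)\in[0,1]$ precisely for $2\le p\le 10/3$) and inserting the result into
$\int_{B(r)}|w|^p \le \left(\int_{B(r)}|w|^2\right)^{(6-p)/4}\left(\int_{B(\rho)}|w|^6\right)^{(p-2)/4}$
makes the total power of $\int_{B(\rho)}|\nabla u|^2\,dy$ equal to exactly $1$:
\begin{equation*}
\int_{B(r)}|w|^p\,dy \lesssim \left(\rho A(\rho)\right)^{(p-2)/2} r^{(10-3p)/2}\int_{B(\rho)}|\nabla u|^{2}\,dy .
\end{equation*}
The time integration is then linear in the gradient term (no H\"older loss in time), and multiplying by $r^{p-5}$ gives exactly $(\rho/r)^{p/2}A^{(p-2)/2}(\rho)E(\rho)$, i.e.\ \eqref{cp}; the same computation on half balls, using the Sobolev inequality for functions vanishing on the flat part of $\partial B^{+}(\rho)$, gives \eqref{C3} with no residual term. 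Note finally that the paper itself does not prove this lemma but cites \cite{[HWZ],[RWW]}, where an argument of this type is carried out; so the only question here is whether your route closes, and as written it does not.
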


We derive decay estimates in terms of $\nabla\Pi$.
See \cite{[KP]} for a different version.

\begin{lemma}
\label{presurel}
For $0<r\leq \rho/8$,
\begin{equation}
\label{presure}
P_{1,3/2}(\nabla\Pi,r)
\lesssim \left(\dfrac{\rho}{r}\right) E(\rho) + \left(\frac{r}{\rho}\right) P_{5/4}(\nabla\Pi,\rho)
\end{equation}
for the interior case, where the implied positive constant does not depend on $r$ and $\rho$.
\end{lemma}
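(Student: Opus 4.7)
The plan is the standard CKN-style decomposition of the pressure, tailored to estimate $\nabla\Pi$ rather than $\Pi$ itself. First I fix a smooth cutoff $\chi$ with $\chi\equiv1$ on $B(5\rho/8)$, $\mathrm{supp}\,\chi\subset B(3\rho/4)$, and $|\nabla^k\chi|\lesssim\rho^{-k}$. Using the divergence-free identity $-\Delta\Pi=\partial_j(u_i\partial_i u_j)$, I split $\Pi=\Pi_1+\Pi_2$, where $\Pi_1$ solves the whole-space Poisson equation $-\Delta\Pi_1=\partial_j(\chi u_i\partial_i u_j)$; by construction $\Pi_2$ is harmonic on $B(5\rho/8)$. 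The two pieces will be estimated by different tools: Calder\'on--Zygmund theory for $\Pi_1$ and interior gradient estimates for the harmonic $\Pi_2$.

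For the harmonic remainder, each component of $\nabla\Pi_2$ is harmonic on $B(5\rho/8)$, so the subharmonicity mean-value inequality gives $\|\nabla\Pi_2(s)\|_{L^\infty(B(\rho/4))}\lesssim \rho^{-12/5}\|\nabla\Pi_2(s)\|_{L^{5/4}(B(\rho/2))}$. Converting $L^\infty(B(r))$ to $L^{3/2}(B(r))$ via $|B(r)|^{2/3}\lesssim r^2$ and applying H\"older in time (exponents $5$ and $5/4$) will yield $P_{1,3/2}(\nabla\Pi_2,r)\lesssim (r/\rho)^{7/5}P_{5/4}(\nabla\Pi_2,\rho)$; since $r\le\rho/8$ this is absorbed into $(r/\rho)P_{5/4}(\nabla\Pi_2,\rho)\lesssim (r/\rho)\bigl(P_{5/4}(\nabla\Pi,\rho)+P_{5/4}(\nabla\Pi_1,\rho)\bigr)$, and the last summand will be reabsorbed by the $\Pi_1$-bound below. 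For the local part, since $\partial_j(\chi u_i\partial_i u_j)$ is a pure divergence (up to cutoff commutators whose kernels act only on the annulus $\{5\rho/8\le|y|\le 3\rho/4\}$ and are hence smooth and regular), $\nabla\Pi_1$ is a Calder\'on--Zygmund singular integral applied to $\chi u\cdot\nabla u$, which by H\"older in space with $1/(10/3)+1/2=4/5$ gives $\|\nabla\Pi_1(s)\|_{L^{5/4}(\R^3)}\lesssim\|u(s)\|_{L^{10/3}(B(\rho))}\|\nabla u(s)\|_{L^{2}(B(\rho))}$. A H\"older step in time together with the Sobolev interpolation $E_{10/3}(\rho)\lesssim A^{2/3}(\rho)E(\rho)+A^{5/3}(\rho)$ (using the $L^\infty_tL^2_x$ control on $A(\rho)$ inherent to suitable weak solutions) will absorb the $L^{10/3}$ norm into $E(\rho)$, and unfolding the scaling of $P_{1,3/2}$ will produce the claimed $(\rho/r)E(\rho)$ prefactor.

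The main technical obstacle will be reconciling the asymmetric $L^{3/2}$ output norm with the $L^{5/4}$ input norm in the harmonic estimate, and handling the cutoff commutators in $\nabla\Pi_1$: the derivatives of $\chi$ generate convolution terms with kernels supported in the annulus, where $|x-y|\gtrsim\rho$ for $x\in B(\rho/4)$; these are regular but carry an extra negative power of $\rho$ from $|\nabla\chi|\lesssim\rho^{-1}$, and one must track carefully how these scalings combine with the time integration to see that they respect the same $(\rho/r)E(\rho)$ budget and do not spoil the balance between the two sides of the inequality.
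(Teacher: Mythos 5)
There is a genuine gap, and it sits exactly at the point this lemma is engineered to avoid. Your decomposition $\Pi=\Pi_1+\Pi_2$ (localized Newtonian potential plus harmonic remainder) is the same architecture as the paper's proof, and your harmonic-part estimate via the mean value property, giving the factor $(r/\rho)^{7/5}$, is sound. The failure is in the Calder\'on--Zygmund step: you bound $\|\nabla\Pi_1(s)\|_{L^{5/4}}\lesssim\|u(s)\|_{L^{10/3}}\|\nabla u(s)\|_{L^2}$ and then plan to ``absorb'' the $L^{10/3}$ norm using $E_{10/3}(\rho)\lesssim A^{2/3}(\rho)E(\rho)+A^{5/3}(\rho)$. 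This does not absorb anything into $E(\rho)$; it injects $A(\rho)=\sup_s\rho^{-1}\int_{B(\rho)}|u|^2$ into the final bound, and $A(\rho)$ appears nowhere on the right-hand side of \eqref{presure}. Since $A(\rho)$ is not dominated by $E(\rho)$ (consider $u$ near a large constant vector field: $E$ tiny, $A$ huge), an inequality carrying powers of $A(\rho)$ is strictly weaker than the stated one and cannot be used to deduce it. The same defect infects your reabsorption of $P_{5/4}(\nabla\Pi_1,\rho)$: bounding $\|u\nabla u\|_{L^{5/4}(Q(\rho))}$ needs $\|u\|_{L^{10/3}(Q(\rho))}$ (or $\|\nabla u\|_{L^{10/3}_t L^2_x}$, which $E(\rho)$ does not control either), hence again $A(\rho)$. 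Note that Remark 2.4 of the paper stresses that the whole point of \eqref{presure} is precisely the absence of $\norm{u}_{L^{\infty,2}}$ from the first term; a version with $A(\rho)$ would undermine the way the lemma is used in Section 5.

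The repair is what the paper actually does, and it requires two changes to your scheme. First, use the divergence-free condition to subtract spatial means before forming the source: since $\partial_i\partial_j(u_iu_j)=\partial_i\partial_j U_{i,j}$ with $U_{i,j}=(u_i-\overline{u_i}_{\rho/2})(u_j-\overline{u_j}_{\rho/2})$, one may run the whole decomposition with $U_{i,j}$ in place of $u_iu_j$. Second, run the CZ estimate in the mixed norm $L^1_tL^{3/2}_x$ rather than the pure space-time $L^{5/4}$ norm --- this is exactly why the conclusion is stated for $P_{1,3/2}$: H\"older pairs $L^2_tL^6_x\times L^2_tL^2_x\to L^1_tL^{3/2}_x$, and the Poincar\'e--Sobolev inequality gives $\|u-\overline{u}_{\rho/2}\|_{L^{2,6}(Q(\rho/2))}\lesssim\norm{\nabla u}_{L^{2}(Q(\rho/2))}$, so the quadratic term is bounded by $\norm{\nabla u}_{L^2(Q(\rho))}^{2}=\rho E(\rho)$ with no appearance of $A$. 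With that pairing replacing your $L^{10/3}\times L^{2}\to L^{5/4}$ pairing (and the pressure-commutator terms handled, as in the paper, through $\|\Pi-\overline{\Pi}_{\rho/2}\|_{L^{5/4,15/7}}\lesssim\|\nabla\Pi\|_{L^{5/4}}$), your decomposition closes and yields the stated inequality.
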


\begin{proof}
Without loss of generality, we assume that $z=0$.
We consider the usual cut-off function $\phi\in C^{\infty}_{0}(B(\rho/2))$ such that $\phi\equiv1$ on $B(3\rho/8)$ with $0\leq\phi\leq1$ and
\[
|\nabla\phi|^2 + |\nabla^{2}\phi| \leq C\rho^{-2}.
\]
Due to the incompressible condition, we infer that
\[
\partial_{i}\partial_{i}(\partial_{k} \Pi\phi)=-\phi \partial_{i}\partial_{j}\big[\partial_{k}U_{i,j}\big]
+2\partial_{i}\phi\partial_{i}\partial_{k} \Pi+\partial_{k} \Pi\partial_{i}\partial_{i}\phi
\]
where $U_{i,j}=(u_{j}- \overline{{u_{j}}} _{\rho/2})(u_{i}-\overline{{u_{i}}}_{\rho/2})$.
Let $K$ denote the normalized fundamental solution of Laplace equation.
Then for $x\in B(3\rho/8)$
\begin{equation}
\label{pp}
\begin{split}
\partial_{k} \Pi(x)
&= K \ast \{-\phi \partial_{i}\partial_{j}[\partial_{k}U_{i,j}]
+ 2\partial_{i}\phi\partial_{i}\partial_{k} \Pi+\partial_{k} \Pi\partial_{i}\partial_{i}\phi\} \\
&=-\partial_{i}\partial_{j}K \ast (\phi [ \partial_{k}U_{i,j}]) \\
&\quad + 2\partial_{i}K \ast(\partial_{j}\phi [\partial_{k}U_{i,j} ])-K \ast (\partial_{i}\partial_{j}\phi[\partial_{k}U_{i,j}]) \\
&\quad + 2\partial_{k}\partial_{i}K \ast(\partial_{i}\phi\Pi)+2\partial_{i}K \ast(\partial_{k}\partial_{i}\phi\Pi) +\partial_{k}K \ast(\partial_{i}\partial_{i}\phi\Pi)+K \ast(\partial_{i}\partial_{i}\partial_{k}\phi\Pi)\\
&=: \partial_{k} P_{1}(x)+\partial_{k}  P_{2}(x)+\partial_{k}  P_{3}(x).
\end{split}
\end{equation}
Since $\phi(x)=1$, where $x\in B(r)$  ($0<r\leq \rho/4$), we know that there is no singularity in $\partial_{k}  P_{2}(x)$ and $\partial_{k}  P_{3}(x)$.
As a consequence, we have
\begin{align*}
|\partial_{k}  P_{2}(x)| &\leq \rho^{-3} \int_{B(\rho/2)}|\partial_{k}U_{i,j}|dx,\\
|\partial_{k}  P_{3}(x)| &\leq \rho^{-4} \int_{B(\rho/2)}|\Pi|dx,
\end{align*}
and by H\"older's inequality
\begin{align}
\|\partial_{k}  P_{2}(x) \|_{L^{3/2}(B(r))} &\lesssim \left(\frac{r}{\rho}\right)^{2}\| \partial_{k}U_{i,j} \|_{L^{3/2}
(B(\rho/4))},\label{ptip2}\\
\|\partial_{k}  P_{3} (x) \|_{L^{3/2}(B(r))} &\lesssim \left(\frac{r}{\rho}\right)^{2}\| \Pi\|_{L^{5/4,15/7}
(B(\rho/4))}.\label{ptip3}
\end{align}
Using the H\"older inequality and the Poinca\'e-Sobolev inequality, we see that
\begin{equation}
\label{ti2}
\|\partial_{k}U_{i,j}\|_{L^{1,3/2}(Q(\rho/2))}\leq \|u-\overline{u}_{\rho/2}\|_{L^{2,6}(Q(\rho/2))}\norm{\nabla u}_{L^{2}(Q(\rho/2))}\lesssim \|\nabla u\|^{2}_{L^{2}(Q(\rho/2))}.
\end{equation}
It follows from \eqref{ptip2} and \eqref{ti2} that
\begin{equation}
\label{E41}
\|\partial_{k}  P_{2}(x)  \|_{L^{1,3/2}(Q(r))} \lesssim \left(\frac{r}{\rho}\right)^{2} \|\nabla u\|^{2}_{L^{2}(Q(\rho/2))}.
\end{equation}
Notice that $\Pi-\Pi_{\frac{\rho}{2}}$ also satisfies \eqref{pp}, we derive from \eqref{ptip3} that
\begin{equation}
\label{E42}
\begin{split}
\|\partial_{k}  P_{3}(x)  \|_{L^{1,3/2}(Q(r))}
&\lesssim \left(\frac{r}{\rho}\right)^{2}\|  \Pi-\overline{\Pi}_{\frac{\rho}{2}}\|_{L^{5/4,15/7}(Q(\rho/2))} \\
&\lesssim \left(\frac{r}{\rho}\right)^{2}\| \nabla \Pi \|_{L^{5/4}(Q(\rho/2))}.
\end{split}
\end{equation}
According to the Calder\'on--Zygmund theorem and \eqref{ti2}, we get
\begin{equation}
\label{E43}
\|\partial_{k}P_{1}(x)\|_{L^{1,3/2}(Q(r))}
\lesssim \|\partial_{k}U_{i,j}\|_{L^{1,3/2}(Q(\rho/2))}
\lesssim \norm{\nabla u}_{L^{2}(Q(\rho))}^{2}.
\end{equation}
Combining the estimates \eqref{E41}, \eqref{E42}, \eqref{E43}, we get
\begin{equation}
\label{lem2.3}
\begin{split}
\|\partial_{k}\Pi\|_{L^{1,3/2}(Q(r))}
&\lesssim \|\partial_{k}P_{1}(x)\|_{L^{1,3/2}(Q(r))}
+ \|\partial_{k}P_{2}(x)\|_{L^{3/2}(Q(r))}
+ \|\partial_{k}P_{3}(x)\|_{L^{1,3/2}(Q(r))} \\
&\lesssim \norm{\nabla u}_{L^{2}(Q(\rho))}^{2}
+ \left(\frac{r}{\rho}\right)^{2}\|\partial_{k}\Pi\|_{L^{5/4}(Q(\rho))}.
\end{split}
\end{equation}
Hence
\[
P_{1,3/2}(\nabla\Pi,r) \lesssim \left(\dfrac{\rho}{r}\right) E (\rho) + \left(\frac{r}{\rho}\right) P_{5/4}(\nabla\Pi,\rho).
\]
\end{proof}

\begin{lemma}
\label{presurelk}
For $0<r\leq \rho/8$,
\begin{equation}
\label{presurek}
\tilde{D}_1(r)
\lesssim \left(\frac{\rho}{r}\right) E^{1/\lambda}(\rho) A^{1-1/\lambda}(\rho)
+ \left(\frac{r}{\rho}\right) \tilde{D}_1(r),
\end{equation}
where $\lambda$ is defined in \eqref{gktc1} and the implied positive constant does not depend on $r$ and $\rho$.
In addition, this lemma remains valid for $\kappa=1$.
\end{lemma}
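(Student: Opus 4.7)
The plan is to mirror the three-term pressure decomposition used in the proof of Lemma~\ref{presurel}, adapted to the half-space geometry and to the mixed $L^{\lambda,\kappa}$ norm dictated by \eqref{gktc1}.  With the same spatial cut-off $\phi\in C_0^\infty(B(\rho/2))$ as in the interior case ($\phi\equiv1$ on $B(3\rho/8)$, $|\nabla\phi|^2+|\nabla^2\phi|\lesssim\rho^{-2}$), I would represent, for $x\in B^+(r)$ with $r\leq\rho/8$,
\[
\partial_k\Pi(x)=\partial_kP_1(x)+\partial_kP_2(x)+\partial_kP_3(x),
\]
where $P_1$ is a Calder\'on--Zygmund expression in $\phi\,\partial_k(u_iu_j)$ and $P_2,P_3$ carry at least one derivative of $\phi$, so their kernels are non-singular on $B^+(r)\times B^+(\rho/2)$ and controlled by $\rho^{-N}$.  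For the half-space, one replaces the whole-space fundamental solution by the Neumann Green function of $-\Delta$ on $\mathbb{R}^3_+$, or equivalently extends $u$ across $\{y_3=0\}$ by the divergence-preserving reflection (tangential components odd, normal component even) so that the interior formula applies verbatim; either way, the no-slip condition $u|_{y_3=0}=0$ kills the would-be boundary traces.

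The new ingredient, specific to this lemma, is a space-time interpolation for the nonlinear term in the $L^{\lambda,\kappa}$ norm.  Using H\"older in space with $\tfrac{1}{\kappa}=\tfrac{1}{p_1}+\tfrac{1}{2}$, the three-dimensional Gagliardo--Nirenberg inequality
\[
\norm{u}_{L^{p_1}(B^+(\rho))}\lesssim\norm{u}_{L^{2}(B^+(\rho))}^{1-\alpha}\norm{\nabla u}_{L^{2}(B^+(\rho))}^{\alpha},\qquad \alpha=\tfrac{3}{2}-\tfrac{3}{p_1},
\]
and then H\"older in time, the scaling identity $\tfrac{2}{\lambda}+\tfrac{3}{\kappa}=4$ forces $\alpha=\tfrac{2}{\lambda}-1$ and $\lambda(1+\alpha)=2$, which yields
\[
\norm{u\,\nabla u}_{L^{\lambda,\kappa}(Q^+(\rho))}\lesssim\norm{u}_{L^{\infty,2}(Q^+(\rho))}^{2-2/\lambda}\norm{\nabla u}_{L^{2}(Q^+(\rho))}^{2/\lambda}=\rho\,A(\rho)^{1-1/\lambda}E(\rho)^{1/\lambda}.
\]
Applying the $(\lambda,\kappa)$ Calder\'on--Zygmund bound (valid because $1<\lambda<2$ and $1<\kappa<3/2$) to $\partial_kP_1$ gives $\norm{\partial_kP_1}_{L^{\lambda,\kappa}(Q^+(r))}\lesssim\rho\,A(\rho)^{1-1/\lambda}\,E(\rho)^{1/\lambda}$, and dividing by $r$ produces the first term of the claim.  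The non-singular pieces $\partial_kP_2,\partial_kP_3$ gain a factor $(r/\rho)^{2}$ from their kernels and then reduce, by H\"older in space-time, to $(r/\rho)\tilde{D}_1(\rho)$, exactly as in the final line of \eqref{lem2.3}.

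The main obstacle is Step 1: in the interior proof the whole-space fundamental solution of $-\Delta$ was used directly, whereas here one must work with an object compatible with the boundary $\{y_3=0\}$ while preserving Calder\'on--Zygmund bounds in the mixed $L^{\lambda,\kappa}$ norm.  The reflection route is the cleanest resolution, since it keeps both the divergence-free condition and the classical $L^p$ theory of singular integrals intact.  As for the claim that the lemma persists at $\kappa=1$ (the limit case $\lambda=2$, at which the $L^1$ endpoint of Calder\'on--Zygmund fails), one can integrate by parts once inside $P_1$ so that a derivative is absorbed into the kernel before the $L^1_x$ norm is taken; the rest of the argument is unaffected.
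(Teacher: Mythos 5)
Your core analytic content coincides with the paper's own (very terse) argument: the paper proves Lemma \ref{presurelk} by replacing the interior decomposition \eqref{pp} with the variant \eqref{ppk} (in which the cut-off terms carry $\partial_k\Pi$ rather than $\Pi$, so that they are bounded directly by $\tilde{D}_1(\rho)$) and then repeating the proof of Lemma \ref{presurel}. Your Calder\'on--Zygmund treatment of $P_1$, the space-time interpolation $\norm{u\nabla u}_{L^{\lambda,\kappa}(Q(\rho))}\lesssim \rho\, A^{1-1/\lambda}(\rho)E^{1/\lambda}(\rho)$ with the bookkeeping $\lambda(1+\alpha)=2$ forced by \eqref{gktc1}, and the $(r/\rho)$ gain on the non-singular pieces reproduce exactly that, and you are right that the last term of \eqref{presurek} should read $\tilde{D}_1(\rho)$. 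However, there are two genuine problems. First, you have misidentified the setting: Lemma \ref{presurelk} is the \emph{interior} companion of Lemma \ref{presurel}, proved with the whole-space fundamental solution $K$; the boundary analogue is Lemma \ref{preest:100}, which the paper proves by a different route following \cite{[GKT]}. Your half-space machinery is therefore unnecessary --- and, worse, incorrect as stated: the no-slip condition does \emph{not} kill the boundary contributions to the pressure representation, because on $\{y_3=0\}$ the pressure satisfies the nonzero Neumann condition $\partial_3\Pi=\partial_3^2u_3$, and the reflected velocity field does not solve the Navier--Stokes equations in a full ball, so the claim that "the interior formula applies verbatim" fails. This is precisely why the boundary case requires the separate and weaker estimate \eqref{pbd}.

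Second, your fix at the endpoint $\kappa=1$ does not work. Integrating by parts "inside $P_1$" so that the derivative $\partial_k$ lands on the kernel produces $\partial_i\partial_j\partial_k K$, which is homogeneous of degree $-4$: convolution with it is not a bounded operator on any $L^p$, so this makes the term more singular, not less, and does not circumvent the failure of the Calder\'on--Zygmund theorem on $L^1$. (Passing instead to $L^q(B(r))$ with $q>1$ and paying $r^{3(1-1/q)}$ also fails, because the resulting power of $\norm{\nabla u}_{L^2_x}$ exceeds what its $L^2_t$ integrability allows; a genuine endpoint argument would need extra structure, e.g.\ the div--curl/Hardy-space form $\partial_i\partial_jU_{i,j}=\partial_iu_j\,\partial_ju_i$ of the source term.) A smaller but real slip in the same step: the Gagliardo--Nirenberg inequality $\norm{u}_{L^{p_1}(B(\rho))}\lesssim\norm{u}_{L^2(B(\rho))}^{1-\alpha}\norm{\nabla u}_{L^2(B(\rho))}^{\alpha}$ is false on a ball without subtracting the mean (constants violate it); this is exactly why the paper works with $U_{i,j}=(u_j-\overline{u_j}_{\rho/2})(u_i-\overline{u_i}_{\rho/2})$ rather than with $u_iu_j$, and in the interior setting you must do the same.
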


\begin{proof}
We get the result by replacing \eqref{pp} with
\begin{equation}
\label{ppk}
\begin{split}
\partial_{k} \Pi(x)
&= K \ast \{-\phi \partial_{i}\partial_{j}[\partial_{k}U_{i,j} ]
+2\partial_{i}\phi\partial_{i}\partial_{k} \Pi+\partial_{k} \Pi\partial_{i}\partial_{i}\phi\}\\
&= -\partial_{i}\partial_{j}K \ast (\phi [ \partial_{k}U_{i,j}] )\\
&\quad +2\partial_{i}K \ast(\partial_{j}\phi [\partial_{k}U_{i,j} ])-K \ast
(\partial_{i}\partial_{j}\phi[\partial_{k}U_{i,j}])\\
&\quad -2\partial_{i}K \ast(\partial_{i}\phi \partial_{k}\Pi) -K \ast(\partial_{i}\partial_{i}\phi \partial_{k}\Pi)\\
&=: \partial_{k} P_{1}(x)+\partial_{k}  P_{2}(x)+\partial_{k}  P_{3}(x).
\end{split}
\end{equation}
and by making a slight variant of the proof of Lemma \ref{presurel}.
\end{proof}

\begin{remark}
This lemma gives an improvement of \cite[Lemma 17, p617]{[GKT]}.
Since the standard parabolic regularization theory are not applicable to the case $L^{1}_{t}$, a particularly interesting case in this lemma is $\kappa=1$,
\end{remark}

In the spirit of above Lemma
\ref{presurel} and \cite[Lemma 4, p11]{[CY18]}, we obtain the following result for the boundary case.
\begin{lemma}\label{preest:100}
If $z=(x,t), x\in\partial\R^3_+, t-\rho^2>0$, and $t<T$, then for $0 \leq r \leq \rho/4$, $\lambda\leq5/4$, and $\kappa\leq15/7$,
\begin{equation}
\label{pbd}
\tilde{D}_1(r) \lesssim \left(\frac{\rho}{r}\right) E^{1/\lambda}(\rho) A^{1-1/\lambda}(\rho)
+ \left(\frac{r}{\rho}\right) \left(E^{1/2}(\rho) + P_{5/4}(\nabla\Pi,\rho)\right),
\end{equation}
where $\lambda$ is defined in \eqref{gktc1} and the implied positive constant does not depend on $r$ and $\rho$.
\end{lemma}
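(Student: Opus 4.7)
The plan is to mirror the interior argument of Lemma \ref{presurel} while incorporating the boundary machinery of \cite[Lemma 4]{[CY18]}. The essential new feature is that on $\R^3_+$ the pressure is not simply a Newtonian potential of the nonlinearity: differentiating the no-slip condition and reading off the third component of \eqref{NSB} at $x_3=0$ yields the Neumann-type relation $\partial_3\Pi=\partial_3^2 u_3$, so an additional harmonic corrector must be carried through the estimates.

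After translating so that $z=0$, I would fix a cut-off $\phi\in C^\infty_0(B(\rho/2))$ with $\phi\equiv 1$ on $B(3\rho/8)$ and decompose
\[
\nabla\Pi \;=\; \nabla P_1+\nabla P_2+\nabla P_3+\nabla P_h \qquad \text{on } B^+(r),
\]
where $P_1$ is the Calder\'on--Zygmund piece generated by $\phi\,\partial_i\partial_j U_{i,j}$ with $U_{i,j}=(u_i-\overline{u_i}_{B^+(\rho/2)})(u_j-\overline{u_j}_{B^+(\rho/2)})$, the terms $P_2,P_3$ are the commutator contributions involving $\nabla\phi$ and $\nabla^2\phi$ (so their kernels are smooth on $B^+(r)$ because $\nabla\phi$ vanishes there), and $P_h$ is the harmonic corrector built from the Neumann Green's function of the half-space, encoding the boundary datum for $\partial_3\Pi$.

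For $\nabla P_1$, Calder\'on--Zygmund on $\R^3$ (after even extension of $\Pi-\overline{\Pi}$) yields $\|\nabla P_1\|_{L^\lambda L^\kappa}\lesssim \|\nabla U_{i,j}\|_{L^\lambda L^\kappa(Q^+(\rho/2))}$; then H\"older together with the Poincar\'e--Sobolev inequality and an interpolation between $L^\infty_tL^2_x$ and $L^2_tL^6_x$, subject to the relation $2/\lambda+3/\kappa=4$ from \eqref{gktc1}, produces exactly the factor $E^{1/\lambda}(\rho)A^{1-1/\lambda}(\rho)$, while the $\rho/r$ prefactor comes from rescaling against the $r^{-1}$ in the definition of $\tilde D_1(r)$. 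For $\nabla P_2,\nabla P_3$ the kernels are non-singular on $B^+(r)$, and a direct pointwise estimate followed by H\"older yields a $(r/\rho)$ gain; after applying Poincar\'e--Sobolev to $\Pi-\overline{\Pi}_{B^+(\rho/2)}$ in the spirit of \eqref{ptip3}, their contribution is $(r/\rho)\,P_{5/4}(\nabla\Pi,\rho)$. The harmonic corrector $\nabla P_h$ is controlled by the standard mean-value estimate for harmonic functions, which transfers an $L^\infty$ bound on $B^+(r)$ into an $L^{5/4}$ bound on $B^+(\rho/4)$; subtracting the three pieces already estimated then bounds $\nabla P_h$ by $(r/\rho)(E^{1/2}(\rho)+P_{5/4}(\nabla\Pi,\rho))$, and the trace contribution coming from $\partial_3^2 u_3$ is absorbed into $E^{1/2}(\rho)$ via integration by parts exactly as in \cite[Lemma 4]{[CY18]}.

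The main obstacle will be cleanly producing the interpolated scaling $E^{1/\lambda}A^{1-1/\lambda}$ for every admissible $(\lambda,\kappa)$ up to the endpoint $\lambda=5/4$, $\kappa=15/7$, and propagating the Neumann data $\partial_3^2 u_3$ through the half-space Green's representation without losing the $\rho/r$ balance; once those two points are handled, summing the four contributions and invoking the definition of $\tilde{D}_1(r)$ yields \eqref{pbd}.
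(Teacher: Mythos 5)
There is a genuine gap, and it sits exactly at the point you yourself flag as ``the main obstacle'': the treatment of the harmonic corrector $P_h$ carrying the Neumann data $\partial_3\Pi=\partial_3^2u_3$ on $\{x_3=0\}$. Your plan estimates $\nabla P_h$ by ``the standard mean-value estimate for harmonic functions'' and claims the trace contribution of $\partial_3^2u_3$ is ``absorbed into $E^{1/2}(\rho)$ via integration by parts.'' Neither step works. First, interior mean-value/derivative estimates for harmonic functions are valid only away from the boundary of the domain of harmonicity; a function harmonic in $B^+(\rho)$ with \emph{nonzero} Neumann data on the flat part does not obey such estimates up to that flat part (even reflection is only available when the Neumann data vanishes), and the flat part is precisely where the singular point $z$ sits. (As a side remark, the direction of the mean-value estimate is also written backwards: it converts an integral bound on the large set into a pointwise bound on the small set, not the reverse.) Second, and more fundamentally, the Neumann datum $\partial_3^2u_3|_{x_3=0}$ involves second derivatives of $u$ at the boundary, which are \emph{not} controlled by the energy quantities $A(\rho)$ and $E(\rho)$: no integration by parts converts a boundary trace of $\nabla^2u$ into a bulk integral of $|\nabla u|^2$ at (almost every) fixed time. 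This is exactly the reason why boundary partial regularity for \eqref{NSB} is harder than the interior case, and why a purely elliptic, fixed-time potential-theoretic argument of the type used in Lemma \ref{presurel} cannot be transplanted to the half-space. Your appeal to \cite[Lemma 4]{[CY18]} for this step is circular in spirit: that lemma is itself established through the boundary Stokes-system machinery, not through an elliptic decomposition.

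For comparison, the paper does not attempt any elliptic decomposition at the boundary. It invokes a slight variant of \cite[Lemma 11]{[GKT]}, whose proof treats $u$ and $\Pi$ \emph{together} through the time-dependent Stokes system: one splits off the solution of the inhomogeneous Stokes problem in the half-space with force $-u\cdot\nabla u$, controlled by parabolic coercive (maximal regularity) estimates in $L^{\lambda}_tL^{\kappa}_x$ with $2/\lambda+3/\kappa=4$ --- this is what produces the term $(\rho/r)\,E^{1/\lambda}(\rho)A^{1-1/\lambda}(\rho)$ --- while the remainder solves the homogeneous Stokes system with no-slip data on the flat part, for which Seregin-type local boundary regularity yields the factor $(r/\rho)$ together with $E^{1/2}(\rho)+\rho^{-2}\|\Pi-\overline{\Pi}_{\rho}\|_{L^{\lambda,\kappa}(Q^{+}(\rho))}$. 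It is only the last step --- converting $\rho^{-2}\|\Pi-\overline{\Pi}_{\rho}\|_{L^{\lambda,\kappa}}$ into $(r/\rho)P_{5/4}(\nabla\Pi,\rho)$ by H\"older's inequality (using $\lambda\le 5/4$, $\kappa\le 15/7$) and the Poincar\'e--Sobolev inequality --- that is carried out in the paper, and this part your proposal does capture correctly in the handling of the $P_2,P_3$ terms in the spirit of \eqref{ptip3}. To repair your argument you would have to replace the harmonic corrector and its mean-value estimate by the parabolic Stokes decomposition; the rest of your outline can then be discarded, since the Calder\'on--Zygmund piece is also subsumed in the maximal regularity bound.
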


\begin{proof}
A slight variant of the proof \cite[Lemma 11, p608]{[GKT]} provides the estimates
\[
\tilde{D}_1(r)
\lesssim \left(\frac{\rho}{r}\right)E^{1/\lambda}(\rho) A^{1-1/\lambda}(\rho)
+ \left(\frac{r}{\rho}\right) \left(E^{1/2}(\rho)+\rho^{-2} \|\Pi-\overline{\Pi}_{\rho}\|_{L^{\lambda,\kappa}(Q^{+}(\rho))}\right).
\]
From $\lambda\leq5/4$ and $\kappa\leq15/7$   and H\"older's inequality, we see that
\[
\rho^{-2}\|\Pi-\overline{\Pi}_{\rho}\|_{L^{\lambda,\kappa}(Q^{+}(\rho))}
\lesssim \rho^{-1}\|\Pi-\overline{\Pi}_{\rho}\|_{L^{5/4,15/7}(Q^{+}(\rho))}
\lesssim \rho^{-1}\|\nabla\Pi \|_{L^{5/4}(Q^{+}(\rho))}.
\]
Combining the two estimates yields the result.
\end{proof}

\section{Proof of Theorem \ref{the1.1}}
\label{S5}

In this section, we consider the upper box-dimension of potential interior singular points for suitable weak solutions to the Navier--Stokes equations \eqref{NS}.
Actually, Theorem \ref{the1.1} is a direct consequence of Proposition \ref{boxprop}.
The proof is based the contradiction argument, which is very well-known (see, for example, \cite{[WW2],[KP],[CY18]}).
Thus, we suffices to prove Proposition \ref{boxprop}.
We divide its proof into a few steps.

\begin{enumerate}[\bf{Step} 1)]
\item
We may prove the theorem with assuming that $z=0$.
We shall show that the quantities on the left of \eqref{loc1.222} can be controlled by the following assumption \eqref{assumei}, which was used in \cite[Page, 1762, inequality (3.2)]{[WW2]}.
Assume that for some fixed $\rho \le \rho_0$
\begin{equation}
\label{assumei}
\int_{Q (2\rho)} |\nabla u |^{2} + | u |^{ 10/3} + |\Pi-\overline{\Pi}_{2\rho} |^{ 5/3} + |\nabla \Pi| ^{5/4} dxdt
\leq (2\rho)^{ 5/3-\gamma}\ep_{1}
\end{equation}
From the local energy inequality we obtain that
\begin{equation}
\label{E51}
\begin{split}
&\sup_{-\rho^{2}\leq t<0}\int_{B(\rho )}|u|^{2} dx
+ \int_{Q (\rho)} |\nabla u|^{2}dxdt \\
&\lesssim \left(\int_{Q(2\rho)}|u|^{10/3}dxdt\right)^{3/5}
+ \int_{Q(2\rho)} |u|^{2} u \cdot \nabla \phi dxdt \\
&\quad + \int_{Q(2\rho)}(\Pi-\overline{\Pi}_{2\rho}) u \cdot \nabla \phi dxdt
\end{split}
\end{equation}
by fixing $\phi(x,t)$, which is a smooth positive function supported in $Q(2\rho)$ and with value $1$ on the ball $Q(\rho)$.
\item
Using the divergence free condition, H\"older's inequality, and the Gagliardo--Nirenberg inequality, we estimate
\begin{equation}
\label{wwnon1}
\begin{split}
&\int_{Q(2\rho)} |u|^{2} u \cdot \nabla \phi dxdt \\
&\lesssim \rho^{-1}\Big\| |u|^{2}- \overline{|u|^{2}}_{2\rho}  \Big\|_{L^{10/7,15/8}(Q(2\rho))} \norm{u}_{L^{10/3,15/7}(Q(2\rho))}\\
&\lesssim \rho^{-1}\| u ^{2}\|_{L^{5/3}(Q(2\rho))}^{1/2} \|u\nabla u  \|^{1/2}_{L^{5/4}(Q(2\rho))}  \norm{u}_{L^{10/3,15/7}(Q(2\rho))}\\
&\lesssim \rho^{-1/2}\| u  \|^{5/2}_{L^{10/3}(Q(2\rho))} \norm{\nabla u}^{1/2}_{L^{2}(Q(2\rho))},
\end{split}
\end{equation}
and
\begin{equation}
\label{wwnon2}
\begin{split}
&\int_{Q(2\rho)}(\Pi-\overline{\Pi}_{2\rho}) u \cdot \nabla \phi dxdt \\
&\lesssim \rho^{-1}\norm{\Pi-\overline{\Pi}_{2\rho}}_{L^{10/7,15/8}(Q(2\rho))} \norm{u}_{L^{10/3,15/7}(Q(2\rho))} \\
&\lesssim \rho^{-1}\|\Pi-\overline{\Pi}_{2\rho}\|_{L^{5/3}(Q(2\rho))}^{1/2}
\|\Pi-\overline{\Pi}_{2\rho}\|^{1/2}_{L^{5/4,15/7}(Q(2\rho))} \norm{u}_{L^{10/3,15/7}(Q(2\rho))} \\
&\lesssim \rho^{-1/2}\|\Pi-\overline{\Pi}_{2\rho}\|_{L^{5/3}}^{1/2}
\|\nabla\Pi\|^{1/2}_{L^{5/4}(Q(2\rho))} \norm{u}_{L^{10/3}(Q(2\rho))}.
\end{split}
\end{equation}
Combining \eqref{E51}, \eqref{wwnon1}, and \eqref{wwnon2} and using the assumption \eqref{assumei}, we get
\begin{equation}
\label{loc1.222}
\begin{split}
&\sup_{-\rho^{2}\leq t<0}\int_{B(\rho )}|u|^{2} dx
+ \int_{Q (\rho)} |\nabla u|^{2}dxdt \\
&\lesssim \norm{u}^{2}_{L^{10/3}(Q(2\rho))}
+ \rho^{-1/2} \|u\|^{5/2}_{L^{10/3}(Q(2\rho))} \norm{\nabla u}^{1/2}_{L^{2}(Q(2\rho)) } \\
&\quad + \rho^{-1/2}\| u  \|_{{L^{10/3}}(Q(2\rho))} \|\Pi-\overline{\Pi}_{2\rho}\|_{L^{5/3}(Q(2\rho))}^{1/2} \|\nabla\Pi\|^{1/2}_{L^{5/4}(Q(2\rho))} \\
&\lesssim \ep_{1}^{3/5} \rho^{1-3\gamma/5} + \ep_{1} \rho^{7/6- \gamma} \\
&\lesssim \ep_{1}^{3/5} \rho^{7/6- \gamma},
\end{split}
\end{equation}
where we have used $\gamma \geq 5/12$.
Hence we get
\begin{equation}
\label{E52}
A(u,\rho) \lesssim \ep_{1}^{3/5}\rho^{1/6-\gamma}.
\end{equation}
\item
If we take
\[
\beta = 1/6 \qand \theta=\rho^{\beta}< 1/8,
\]
then from \eqref{cp}, \eqref{assumei}, and \eqref{E52} we get
\begin{align*}
E_{2}(\theta\rho)
&\lesssim \theta^{-1} E(\rho) + \theta^{2} A(\rho) \\
&\lesssim \ep_{1} \theta^{-1} \rho^{2/3-\gamma} + \ep_{1}^{3/5}\theta^{2} \rho^{1/6-\gamma } \\
&\lesssim \ep_{1}^{3/5} \rho  ^{-\beta + 2/3 - \gamma} + \ep_{1}^{3/5} \rho^{2\beta + 1/6 - \gamma} \\
&\lesssim \ep_{1}^{3/5} \rho^{1/2 - \gamma}.
\end{align*}
If $\gamma \leq 1/2$, then
\begin{equation}
\label{condtion1}
E_{2}(\theta\rho) \lesssim \ep_{1}^{3/5}.
\end{equation}
\item
From Lemma \ref{presurel} and \eqref{assumei} we have
\begin{equation}
\label{condtion3}
\begin{split}
&E(\theta\rho) + P_{1,3/2}(\nabla\Pi,\theta\rho) \\
&\lesssim \theta^{-1} E (\rho) + \theta P_{5/4}(\nabla\Pi,\rho) \\
&\lesssim \ep_{1}\theta^{-1}\rho^{2/3-\gamma}
+ \ep_{1}^{4/5} \theta \rho^{1/3-4\gamma/5} \\
&\lesssim \ep_{1}^{4/5} \rho^{-1/6+2/3-\gamma}
+ \ep_{1}^{4/5} \rho^{1/6+1/3-4\gamma/5} \\
&\lesssim \ep_{1}^{4/5} \rho^{1/2-\gamma}.
\end{split}
\end{equation}
Combining \eqref{condtion1} and \eqref{condtion3}, we get
\[
E(\theta\rho)+E_{2}(\theta\rho)
+P_{5/4}(\nabla\Pi,\theta\rho)
\lesssim \ep_{1}^{3/5}.
\]
This together with \eqref{special} yields $z=0$ is a regular point.
This completes the proof of Proposition \ref{boxprop}.
\end{enumerate}
\qed

\section{Proof of Theorem \ref{the1.2}}
\label{S6}

In the first place, we prove Proposition \ref{lebp}.
Then Theorem \ref{brct} and Theorem \ref{brc} follow from Proposition \ref{lebp} and \eqref{gktb}.
After that, we shall give the proof of Proposition \ref{boxpropb}.
Then Theorem \ref{the1.2} is a direct consequence of Proposition \ref{boxpropb} by the contradiction argument.

\begin{prop}
\label{lebp}
Let $p,q$ be defined in Theorem \ref{brc} and denote $\alpha=\frac{2}{\frac{2}{p}+\frac{3}{q}}$.
Suppose that $(u,\Pi)$ is a suitable weak solution to the Navier--Stokes equations \eqref{NSB} in $Q^{+}(R)$.
Then there holds, for any $R>0$,
\begin{equation}
\label{key ineq}
\begin{split}
&\|u\|^2_{L^{\infty,2}(Q^{+}(R/2))} + \|\nabla u\|^2_{L^{2}(Q^{+}(R/2))} \\
&\lesssim R^{(3\alpha-4)/\alpha} \|u\|^{2}_{L^{p,q}(Q^{+}(R))}
+ R^{(3\alpha-5)/(\alpha-1)} \|u\|^{2\alpha/(\alpha-1)}_{L^{p,q}(Q^{+}(R))} \\
&\quad + R^{-1} \|\Pi-\overline{\Pi}_{B^{+}(R)}\|^{2}_{L^{\lambda,\kappa^*}(Q^{+}(R))}
\end{split}
\end{equation}
where the implied positive constants does not depend on $R$.
\end{prop}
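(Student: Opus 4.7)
The natural starting point is the local energy inequality from Definition \ref{sws-3dnse}, applied with a smooth cutoff $\phi$ supported in $Q^+(R)$, identically $1$ on $Q^+(R/2)$, with $|\nabla\phi|\lesssim R^{-1}$ and $|\partial_s\phi|+|\Delta\phi|\lesssim R^{-2}$. The left-hand side is bounded below by $\|u\|_{L^{\infty,2}(Q^+(R/2))}^2+2\|\nabla u\|_{L^2(Q^+(R/2))}^2$, while the right-hand side naturally splits into a linear piece $I=\int_{Q^+(R)}|u|^2(\partial_s\phi+\Delta\phi)$, a cubic piece $II=\int_{Q^+(R)}|u|^2 u\cdot\nabla\phi$, and a pressure piece $III=2\int_{Q^+(R)}\Pi\,u\cdot\nabla\phi$. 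The goal is to estimate each of $I$, $II$, $III$ by the right-hand side of \eqref{key ineq}, modulo energy contributions that can be absorbed on the left.

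The linear piece is immediate: $|I|\lesssim R^{-2}\|u\|_{L^2(Q^+(R))}^2$, and H\"older in space-time (using $2/p+3/q=2/\alpha$) gives $\|u\|_{L^2(Q^+(R))}^2\lesssim R^{5-4/\alpha}\|u\|_{L^{p,q}(Q^+(R))}^2$, hence $|I|\lesssim R^{(3\alpha-4)/\alpha}\|u\|_{L^{p,q}(Q^+(R))}^2$, matching the first term of \eqref{key ineq}. For the pressure piece, the key step is the average-subtraction trick: since $u$ is divergence-free and vanishes on the flat part of $\partial\mathbb{R}^3_+$, we have $\int_{B^+(R)}u\cdot\nabla\phi\,dx=0$ for each $s$, so $\Pi$ can be replaced by $\Pi-\overline{\Pi}_{B^+(R)}(s)$. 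H\"older with exponents $(\lambda,\kappa^*)$ gives $|III|\lesssim R^{-1}\|\Pi-\overline{\Pi}_{B^+(R)}\|_{L^{\lambda,\kappa^*}}\|u\|_{L^{\lambda',\kappa^{*\prime}}}$, and the identity $2/\lambda+3/\kappa^*=3$ (consequence of \eqref{gktc1}) yields $2/\lambda'+3/\kappa^{*\prime}=2$; a routine H\"older plus Sobolev--Poincar\'e on the half-ball then produces $\|u\|_{L^{\lambda',\kappa^{*\prime}}(Q^+(R))}\lesssim R^{1/2}\bigl(\|u\|_{L^{\infty,2}}+\|\nabla u\|_{L^2}\bigr)$, and a standard Young's inequality delivers the pressure term of \eqref{key ineq} plus an absorbable energy term.

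The heart of the proof, and what I expect to be the main obstacle, is the cubic piece $II\lesssim R^{-1}\|u\|_{L^3(Q^+(R))}^3$. I would establish the scaling-invariant interpolation
\[
\|u\|_{L^3(Q^+(R))}^3\lesssim R^{3(\alpha-1)/2}\,\|u\|_{L^{p,q}(Q^+(R))}^\alpha\,\bigl(\|u\|_{L^{\infty,2}(Q^+(R))}+\|\nabla u\|_{L^2(Q^+(R))}\bigr)^{3-\alpha}
\]
by interpolating $L^3$ between $L^{p,q}$ and a Lebesgue space on the energy-critical curve $2/\tilde p+3/\tilde q=3/2$, with the energy factor controlled by Sobolev--Poincar\'e on the half-ball; the interpolation parameter is exactly the $\alpha$ forced by the identity $2/p+3/q=2/\alpha$. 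Since $\alpha\in(1,2]$, Young's inequality with the conjugate pair $(2/(3-\alpha),2/(\alpha-1))$ is admissible and gives
\[
|II|\lesssim R^{(3\alpha-5)/2}\|u\|_{L^{p,q}}^\alpha\bigl(\|u\|_{L^{\infty,2}}+\|\nabla u\|_{L^2}\bigr)^{3-\alpha}\le\epsilon\bigl(\|u\|_{L^{\infty,2}}^2+\|\nabla u\|_{L^2}^2\bigr)+C_\epsilon R^{(3\alpha-5)/(\alpha-1)}\|u\|_{L^{p,q}}^{2\alpha/(\alpha-1)},
\]
which is precisely the second term of \eqref{key ineq}.

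Combining the three estimates and choosing $\epsilon$ small enough to absorb the remaining energy terms into the left-hand side yields \eqref{key ineq}. The principal technical difficulty is verifying the interpolation above uniformly across the full range $1\le 2/q+3/p<2$: depending on the location of $(1/p,1/q)$ relative to both $(1/3,1/3)$ and the energy-critical curve, a case split (or a short iteration over concentric half-cylinders for the absorption step) may be required to make $(1/3,1/3)$ lie in the correct convex hull of the interpolation triple. All other ingredients --- H\"older, Sobolev--Poincar\'e on the half-ball, the divergence-free plus boundary condition trick, and Young's inequality --- are standard, and the scaling bookkeeping laid out above fixes the exponents of $R$ exactly as stated.
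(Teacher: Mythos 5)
Your decomposition and bookkeeping coincide with the paper's own proof: its $L_1,L_2,L_3$ are your $I,II,III$, its estimate for the cubic term is exactly the interpolation you state, and the Young exponents $\bigl(2/(3-\alpha),2/(\alpha-1)\bigr)$ and the resulting powers of $R$ match. The genuine gap is the absorption step, which you treat as routine. With a single cutoff equal to $1$ on $Q^{+}(R/2)$ and supported in $Q^{+}(R)$, every application of Young's inequality produces $\epsilon\bigl(\|u\|^2_{L^{\infty,2}(Q^{+}(R))}+\|\nabla u\|^2_{L^{2}(Q^{+}(R))}\bigr)$, i.e.\ energy over the \emph{large} half-cylinder, whereas the left-hand side of the energy inequality controls only the energy over $Q^{+}(R/2)$. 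Since the former dominates the latter, no choice of small $\epsilon$ permits absorption: what you end up with is an inequality of the form $E(R/2)\le \epsilon E(R)+C(\cdots)$, which by itself yields nothing. This is precisely why the paper does not fix the cutoff: it proves the estimate for every pair $R/2\le r<\rho\le R$, with the cutoff adapted to $(r,\rho)$ and constants degenerating like $(\rho-r)^{-1}$ and $(\rho-r)^{-2}$, keeps the coefficient of the energy over $Q^{+}(\rho)$ at $1/4$, and then invokes the iteration lemma of Giaquinta (Lemma V.3.1 of \cite{[Giaquinta]}) over the nested family. Your parenthetical remark about ``a short iteration over concentric half-cylinders for the absorption step'' names the correct fix, but in your write-up it appears as an optional contingency tied to the interpolation range; it is in fact mandatory for every admissible $(p,q)$, and the entire argument must be set up with variable radii from the outset for that lemma to apply.

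Two smaller points. First, your estimate of the linear term $I$ by direct space--time H\"older, $\|u\|^2_{L^{2}(Q^{+}(R))}\lesssim R^{5-4/\alpha}\|u\|^2_{L^{p,q}(Q^{+}(R))}$, requires both exponents to be at least $2$, which the admissible range does not guarantee (the spatial exponent may lie in $(3/2,2)$ when the temporal one is large, and the temporal exponent may lie below $2$ when the spatial one exceeds $3$). The paper avoids this by bounding $L_1\lesssim (\rho-r)^{-2}\rho^{5/3}\|u\|^{2}_{L^{3}(Q^{+}(\rho))}$ and reusing the same $L^3$ interpolation as for the cubic term, followed by Young with the pair $\bigl(3/(3-\alpha),3/\alpha\bigr)$; this produces the term $R^{(3\alpha-4)/\alpha}\|u\|^{2}_{L^{p,q}}$ uniformly in $(p,q)$. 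Second, your concern that the cubic interpolation might fail for some admissible $(p,q)$ is unfounded: writing $|u|^{3}=|u|^{\alpha}|u|^{3-\alpha}$ and applying H\"older with exponents $(p/\alpha,q/\alpha)$ is always legitimate, because $2/p+3/q=2/\alpha$ forces $p\ge\alpha$ and $q\ge 3\alpha/2>\alpha$, and the complementary norm then lands exactly on the energy-critical curve $2/l+3/k=3/2$, so no case split is needed anywhere in the range.
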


We divide its proof into a few steps.

\begin{proof}
\begin{enumerate}[\bf{Step} 1)]
\item
Fix $r$ and $\rho$ satisfying
\[
0<R/2\leq r<\rho\leq R.
\]
Let $\phi(x,t)$ be non-negative smooth function supported in $Q(\rho)$ such that
$\phi(x,t) = 1$ on $Q(r)$ and
\[
|\partial_{t}\phi| + |\nabla \phi|^2 + |\nabla^{2}\phi| \lesssim (\rho-r)^{-2}.
\]
From the local energy inequality \eqref{loc} we have
\begin{equation}
\label{loc2}
\int_{B^{+}(\rho)} |u(x,t)|^{2} \phi(x,t) dx + 2\int_{Q^{+}(\rho)} |\nabla u|^{2}\phi  dxds
\lesssim  L_{1} +L_{2}+L_{3}
\end{equation}
where
\begin{align*}
L_{1} &= \frac{1}{(\rho-r)^2} \int_{Q^{+}(\rho)} |u|^{2} dxds, \\
L_{2} &= \frac{1}{(\rho-r)} \int_{Q^{+}(\rho)} |u|^{3} \phi dxds, \\
L_{3} &= \frac{1}{(\rho-r)} \int_{Q^{+}(\rho)} u(\Pi-\overline{\Pi}_{B^{+}(\rho)} ) dxds.
\end{align*}
In order to estimate $L_{1},L_{2}$, and $L_{3}$, we shall use the following interpolation inequality.
If $k$ and $l$ satisfy
\[
\frac{2}{l} + \frac{3}{k} = \frac{3}{2} \qand 2\leq l\leq\infty,
\]
then
\begin{equation}
\label{sampleinterplation}
\begin{split}
\norm{u}_{L^{k,l}(Q(\mu))}
&\lesssim \norm{u}_{L^{\infty,2} (Q(\mu))}^{1-2/l}\norm{u}_{L^{ 2,6 }(Q(\mu))}^{2/l} \\
&\lesssim \norm{u}_{L^{\infty,2} (Q(\mu))}^{1-2/l}(\norm{u}_{L^{\infty,2} (Q(\mu))} + \norm{\nabla u}_{L^2(Q(\mu))})^{2/l} \\
&\lesssim \norm{u}_{L^{\infty,2} (Q(\mu))} +\norm{\nabla u}_{L^2(Q(\mu))},
\end{split}
\end{equation}
\item
By H\"older's inequality and \eqref{sampleinterplation}, we have
\begin{align*}
L_{1}
&\lesssim \frac{\rho^{5/3}}{(\rho-r)^{2}}\left(\int_{Q^{+}(\rho)} |u|^{3} dxds\right)^{2/3} \\
&\lesssim \frac{\rho^{5/3}}{(\rho-r)^{2}} \rho^{(\alpha-1)} \norm{u}_{L^{p,q}(Q^{+}(\rho))}^{2\alpha/3}
\left(\norm{u}_{L^{\infty,2}(Q^{+}(\rho))}^{2}+\norm{\nabla u}_{L^{2}(Q^{+}(\rho))}^{2}\right)^{(3-\alpha)/3}.
\end{align*}
By Young's inequality, there is a positive constant $C$ such that
\begin{equation}
\label{4.4.6}
L_{1}
\le\frac{1}{16}\left(\norm{u}_{L^{\infty,2}(Q^{+}(\rho))}^{2}+\norm{\nabla u}_{L^{2}(Q^{+}(\rho))}^{2}\right)
+ \frac{C\rho^{5/\alpha}}{(\rho-r)^{6/\alpha}} \rho^{3(\alpha-1)/\alpha} \norm{u}_{L^{p,q}(Q^{+}(\rho))}^{2}.
\end{equation}
Similarly, by H\"older's inequality and \eqref{sampleinterplation}, we have
\begin{align*}
L_{2}
&= \frac{1}{(\rho-r)} \int_{Q^{+}(\rho)}|u|^{\alpha}|u|^{3-\alpha}dxdt \\
&\lesssim \frac{1}{(\rho-r)} \norm{u}_{L^{p,q}(Q^{+}(\rho))}^{\alpha}\|u\|^{3-\alpha}
_{L^{(3-\alpha)(\frac{p}{\alpha})^{\ast},(3-\alpha)(\frac{q}{\alpha})^{\ast}}(Q^{+}(\rho))} \\
&\lesssim \frac{\rho^{3(\alpha-1)/2}}{(\rho-r)}\norm{u}_{L^{p,q}(Q^{+}(\rho))}^{\alpha}\|u\|^{3-\alpha}
_{L^{2(\frac{p}{\alpha})^{\ast},2(\frac{q}{\alpha})^{\ast}}(Q^{+}(\rho))} \\
&\lesssim \frac{\rho^{3(\alpha-1)/2}}{(\rho-r)} \norm{u}_{L^{p,q}(Q^{+}(\rho))}^{\alpha}
\left(\norm{u}_{L^{\infty,2}(Q^{+}(\rho))}^{2}+\norm{\nabla u}_{L^{2}(Q^{+}(\rho))}^{2}\right)^{(3-\alpha)/2}.
\end{align*}
By Young's inequality, there is a positive constant $C$ such that
\begin{equation}
\label{wwz}
L_{2}
\le \frac{1}{16} \left(\norm{u}_{L^{\infty,2}(Q^{+}(\rho))}^{2}+\norm{\nabla u}_{L^{2}(Q^{+}(\rho))}^{2}\right)
+ \frac{C\rho^{3 }}{(\rho-r)^{\frac{2}{\alpha-1}}} \norm{u}_{L^{p,q}(Q^{+}(\rho))}^{2\alpha/(\alpha-1)}.
\end{equation}
Finally, by H\"older's inequality and \eqref{sampleinterplation}, we have
\begin{align*}
L_{3}
&\lesssim \frac{1}{(\rho-r)}\|\Pi-\overline{\Pi}_{B^{+}(\rho)}\|_{L^{\lambda,\kappa^*}(Q^{+}(\rho))} \norm{u}_{L^{i,j}(Q^{+}(\rho))} \\
&\lesssim \frac{\rho^{1/2} }{(\rho-r)} \|\Pi-\overline{\Pi}_{B^{+}(\rho)}\|_{L^{\lambda,\kappa^*}(Q^{+}(\rho))}
\left(\norm{u}_{L^{\infty,2}(Q^{+}(\rho))}^{2}+\norm{\nabla u}_{L^{2}(Q^{+}(\rho))}^{2}\right)^{1/2}
\end{align*}
where $\kappa^*$, $\lambda$, $i$, and $j$ are numbers in \eqref{gktc1} and \eqref{reg:20}.
By Young's inequality, there is a positive constant $C$ such that
\begin{equation}
\label{4.4.5}
L_{3}
\le \frac{1}{16} \left(\norm{u}_{L^{\infty,2}(Q^{+}(\rho))}^{2}+\norm{\nabla u}_{L^{2}(Q^{+}(\rho))}^{2}\right)
+ \frac{C\rho}{(\rho-r)^{2}} \|\Pi-\overline{\Pi}_{B^{+}(\rho)}\|^{2}_{L^{\lambda,\kappa^*}(Q^{+}(\rho))}.
\end{equation}
\item
Comgining the inequalities \eqref{loc2}, \eqref{wwz}, \eqref{4.4.5}, and \eqref{4.4.6}, we obtain that
\begin{align*}
&\sup_{-r^{2}\leq t\leq0}\int_{B(r)}|u|^2 dx + \int_{Q^{+}(r)}\big|\nabla u\big|^2  d  x d  \tau \\
&\le \frac{1}{4} \left(\norm{u}_{L^{\infty,2}(Q^{+}(\rho))}^{2}+\norm{\nabla u}_{L^{2}(Q^{+}(\rho))}^{2}\right)
+ \frac{C\rho^{5/\alpha}}{(\rho-r)^{6/\alpha}} \rho^{3(\alpha-1)/\alpha} \norm{u}_{L^{p,q}(Q^{+}(\rho))}^{2} \\
&\quad +\frac{C\rho^3}{(\rho-r)^{2/(\alpha-1)}} \norm{u}_{L^{p,q}(Q^{+}(\rho))}^{2\alpha/(\alpha-1)}
+\frac{C\rho}{(\rho-r)^{2}} \|\Pi-\Pi_{B(\rho)}\|^{2}_{L^{\lambda,\kappa^*}(Q^{+}(\rho))}.
\end{align*}
Applying the standard iteration argument, Lemma V.3.1 in \cite{[Giaquinta]}, we get the result.
\end{enumerate}
\end{proof}

We end this section by giving the proof of Proposition \ref{boxpropb}.

\begin{proof}[Proof of Proposition \ref{boxpropb}]
We may assume $z=0$.
First we assume that
\begin{equation}
\label{assume}
\int_{Q^{+} (2\rho)}
|\nabla u |^{2} +| u |^{ 10/3}+|\Pi-\overline{\Pi}_{2\rho} |^{ 5/3}
+ |\nabla \Pi| ^{5/4}dxdt \leq    (2\rho)^{ 5/3-\gamma}\ep_{2}.
\end{equation}
Following the argument in \cite{[KY16]} we shall determine a suitable parameter $\gamma$.
From \eqref{assume}
\begin{equation}
\label{tip54}
\rho^{-1}\|\nabla\Pi\|_{L^{5/4}(Q^{+}(\rho))} \lesssim \ep_{2}^{4/5} \rho^{1/3 - 4\gamma/5}.
\end{equation}
By the Poincar\'e inequality
\[
\int_{Q^{+} (2\rho)}|u|^{2}(\partial_t \phi+\Delta\phi)
\lesssim \int_{Q^{+} (2\rho)} |\nabla u|^{2}
\lesssim \ep_{2} \rho^{5/3-\gamma}.
\]
This and \eqref{wwnon1} and \eqref{wwnon2} implies that
\begin{equation}
\label{loc1.2}
\sup_{-\rho^{2}\leq t<0} \int_{B^{+}(\rho )}|u|^{2} dx + \int_{Q^{+} (\rho)} |\nabla u|^{2} dxdt
\lesssim \rho^{5/3-\gamma} + \rho^{7/6-\gamma}
\lesssim \rho^{7/6-\gamma}.
\end{equation}
Consequently,
\begin{equation}
\label{E61}
A(u,\rho) \lesssim \ep_{2}\rho^{1/6-\gamma}.
\end{equation}
If we take
\[
\beta=\frac{1}{4\lambda}-\frac{1}{12}-\frac{\gamma}{10} \qand \theta=\rho^{\beta} < \frac{1}{4},
\]
then from \eqref{C3}, \eqref{pbd}, and \eqref{tip54} we conclude that
\begin{equation}
\label{E62}
\begin{split}
&E(\theta\rho)+\tilde{D}_1(\theta\rho) \\
&\lesssim \theta^{-1} E(\rho) + \theta^{-1}  E^{1/\lambda}(\rho) A^{1-1/\lambda}(\rho)
+ \theta E^{1/2}(\rho) + \theta P_{5/4}(\nabla\Pi,\rho) \\
&\lesssim \ep_{2}\rho^{-\beta+2/3-\gamma}
+ \ep_{2} \rho^{-\beta+(1-1/\lambda)(1/6-\gamma) + (2/3-\gamma)/\lambda}
+ \ep_{2}^{1/2}\rho^{\beta+(2/3-\gamma)/2}
+ \ep_{2}^{4/5}\rho^{\beta+1/3- 4\gamma/5} \\
&\lesssim \ep_{2}\rho^{-\beta+2/3-\gamma}
+ \ep_{2}^{1/2}\rho^{\beta+(2/3-\gamma)/2}
+ \ep_{2}^{4/5}\rho^{\beta+1/3- 4\gamma/5} \\
\end{split}
\end{equation}
since our choice of $\beta$ satisfies
\[
-\beta+(1-1/\lambda)(1/6-\gamma) + (2/3-\gamma)/\lambda = \beta+1/3- 4\gamma/5.
\]
All the exponents of $\rho$ on the right of \eqref{E62} are nonnegative if
\[
\gamma \le \min\set{\frac{5(3\lambda-1)}{18\lambda}, \frac{5(1+\lambda)}{12\lambda}, \frac{5(1+\lambda)}{18\lambda}}.
\]
Thus, we can choose $\lambda$ sufficiently close to $1$ to obtain $\gamma \leq 5/9$ and
\[
E (\theta\rho)+\tilde{D}_1(\theta\rho) \lesssim \ep_{2}^{1/2}.
\]
Hence Theorem \ref{brct} implies that $z=0$ is a regular point.
\end{proof}

\section*{Acknowledgement}

The research of Wang was partially supported by the National Natural Science Foundation of China under grant No. 11601492 and the
the Youth Core Teachers Foundation of Zhengzhou University of Light Industry.
Yang has been supported by the National Research Foundation of Korea (NRF) grant funded by the Korea government(MSIP) (No. 2016R1C1B2015731) and (No. 2015R1A5A1009350).


\begin{thebibliography}{00}

\bibitem{[CKN]}
L. Caffarelli,  R. Kohn and L. Nirenberg,
Partial regularity of suitable weak solutions of the Navier--Stokes equations, {\it Comm. Pure. Appl. Math., }   \textbf{35} (1982), 771--831.

\bibitem{[CY18]}	
H. J. Choe and M. Yang,
The Minkowski dimension of boundary singular points in the Navier--Stokes equations. arXiv:1805.04724	

\bibitem{[CV]}
K. Choi and A. Vasseur,
Estimates on fractional higher derivatives of weak solutions for the Navier--Stokes equations.
{\it  Ann. Inst. H. Poincar\'e Anal. Non Lin\'eaire, } \textbf{31} (2014), 899--945.

\bibitem{[Falconer]}
K. Falconer,
Fractal Geometry: Mathematical Foundations and Applications (New York: Wiley) 1990.

\bibitem{[Giaquinta]}
M. Giaquinta,
Multiple integrals in the calculus of variations and nonlinear elliptic systems. Annals of Mathematics Studies, 105. Princeton University Press, Princeton, NJ, 1983.

\bibitem{[GP]}
C. Guevara and N. C. Phuc,
Local energy bounds and $\ep$-regularity criteria for the 3D Navier--Stokes system. {\it Calc. Var.,} (2017) 56:68.	

\bibitem{[GKT]}
S. Gustafson, K. Kang and T. Tsai,
Regularity criteria for suitable weak solutions of the Navier--Stokes equations near the boundary. {\it J. Differential Equations., } \textbf{226}, (2006) 594-618.

\bibitem{[HWZ]}C. He, Y. Wang and D. Zhou,
New $\ep$-regularity criteria and  application to the box  dimension of the singular set in  the 3D Navier--Stokes equations, arxiv: 1709.01382.

\bibitem{[KY16]}
Y. Koh and M. Yang,
The Minkowski dimension of interior singular points in the incompressible Navier--Stokes equations. {\it J. Differential Equations.,} \textbf{261} (2016), 3137--3148.

\bibitem{[Kukavica]}
I. Kukavica, The fractal dimension of the singular set for solutions of the Navier--Stokes system.
{\it Nonlinearity.,} \textbf{22} (2009), 2889--2900.

\bibitem{[KP]}I.
Kukavica and Y. Pei, An estimate on the parabolic fractal dimension of the singular set for solutions of the Navier--Stokes system. {\it Nonlinearity.,} \textbf{25} (2012), 2775--2783.

\bibitem{[LS]}
O. Ladyzenskaja and G.  Seregin,
On partial regularity of suitable weak solutions to the three-dimensional Navier--Stokes equations,
{\it   J. Math. Fluid Mech.,}  \textbf{1}   (1999),  356--387.
						
\bibitem{[Lin]}
F. Lin,
A new proof of the Caffarelli--Kohn--Nirenberg Theorem,
{\it Comm. Pure Appl. Math.,}   \textbf{51} (1998),  241--257.
	
\bibitem{[RWW]}
W. Ren, Y. Wang and G. Wu,
Remarks on the singular set of suitable weak solutions for the three-dimensional Navier--Stokes equations.
{\it J. Math. Anal. Appl.} \textbf{467} (2018),  807-824.		

\bibitem{[RS2]}
J. Robinson and W. Sadowski,
On the dimension of the singular set of solutions to the Navier--Stokes equations,
{\it Comm. Math. Phys.,} \textbf{ 309} (2012), 497--506.		

\bibitem{[Seregin]}
G. Seregin,
{\em Local regularity of suitable weak solutions to the Navier--Stokes equations near the boundary},
{\it   J. Math. Fluid Mech.,}  \textbf{4} (2002) 1-29.		

\bibitem{[WZ]}
W. Wang and Z. Zhang, On the interior regularity criteria and the number of singular points to the Navier--Stokes equations,
{\it J. Anal. Math.} \textbf{123} (2014), 139-170.

\bibitem{[WW2]}
Y. Wang and G. Wu,
On the box-counting dimension of potential singular set for suitable weak solutions to the 3D Navier--Stokes equations,
{\it Nonlinearity.,} \textbf{30} (2017), 1762--1772.

\bibitem{[Vasseur]}
A. Vasseur,
A new proof of partial regularity of solutions to Navier--Stokes equations,
{\it NoDEA Nonlinear Differential Equations Appl.,} \textbf{14} (2007), 753--785.


\end{thebibliography}
\end{document}